\documentclass{birkjour}
\usepackage{amsmath,fancyhdr,amssymb,amsthm, geometry, enumerate, graphicx ,amsfonts,hyperref,mathrsfs, calrsfs}

\DeclareMathAlphabet{\pazocal}{OMS}{zplm}{m}{n}

\author{Radhika Vasisht}

\address{%
Department of Mathematics, \br Faculty of Mathematical Sciences, \br University of Delhi,\br Delhi 110 007, India}

\email{radhika.vasisht92@gmail.com}

\author{Ruchi Das}
\address{Department of Mathematics, \br Faculty of Mathematical Sciences, \br University of Delhi,\br Delhi 110 007, India}
\email{rdasmsu@gmail.com}

\title{Furstenberg families and transitivity in non-autonomous systems} 

\theoremstyle{definition}
\newtheorem{defn}{Definition}[section]
\providecommand{\keywords}[1]{\textbf{Keywords :} #1}
\providecommand{\msc}[1]{\textbf{MSC(2010)} #1}
\theoremstyle{plain}
\newtheorem{thm}{Theorem}[section]
\newtheorem{prop}{Proposition}[section]
\newtheorem{Cor}{Corollary}[section]
\newtheorem{exm}{Example}[section]
\newtheorem{rmk}{Remark}[section]

\begin{document}

\date{}
\maketitle

\begin{abstract}
We obtain necessary and sufficient conditions for a non-autonomous system to be $\mathcal{F}$-transitive and $\mathcal{F}$-mixing, where $\mathcal{F}$ is a Furstenberg family. We also obtain some characterizations for topologically ergodic non-autonomous systems. We provide examples/counter examples related to our results.
\end{abstract}

\keywords{Non-autonomous dynamical systems, Furstenberg family, $\mathcal{F}$-transitivity, $\mathcal{F}$-mixing, topologically ergodic}

\msc{Primary 54H20; Secondary 37B55}

\section{Introduction}
Over recent years, the theory of non-autonomous dynamical systems has developed into a highly active field related to, yet recognizably distinct from the autonomous dynamical systems. In \cite{MR1402417}, Kolyada and Snoha gave definition of topological entropy in nonautonomous discrete systems. Since then various reasearchers have worked in this direction \cite{MR3344142, MR3174280}. Non-autonomous discrete systems appear in numerous fields of science such as biology, informatics, and quantum mechanics, especially when phase space and evolution are time dependent. We first introduce some notations. Consider the following non-autonomous discrete dynamical system (N.D.S) $(X,f_{1,\infty})$:
\begin{equation} x_{n+1}=f_n(x_n) , n \geq 1,\nonumber \end{equation}
where $(X,d)$ is a compact metric space, $f_n: X \rightarrow X$ is a continuous map and for convenience, we denote $({f_n})_{n =1}^\infty $ by $f_{1,\infty}$. Naturally, a difference equation of the form $x_{n+1}=f_n(x_n)$ can be seen as the discrete analogue of a non-autonomous differential equation $\frac{dx}{dt}=f(x,t)$. 
\\Dynamical properties of maps in dynamical systems are being widely studied by researchers. They are of great importance in the qualitative study of dynamical systems. One of the most useful and significant dynamical properties is topological transitivity. The concept of topological transitivity can be traced back to Birkhoff \cite{birkhoff1950collected, MR0209095}. It forms the basis of the study of chaos theory and decomposition theorems. Apart from standard topological transitivity, various variants of this concept have been defined and studied for autonomous dynamical systems \cite{MR2162226, MR2942654, MR3017943, MR3781316}. In recent past, variants of topological transitivity have been studied by many researchers for non-autonomous systems as well \cite{MR3269214, MR3584171,  vasisht2018}. While studying transitive dynamical systems, one may encounter a natural question: "Does transitivity happen after every regular interval?". One of the ways to answer this question and to give an appropriate definition to the term "regular interval", is to study the occurance of transitivity based on the largeness of subsets of $\mathbb{N}$. Since Furstenberg families are nothing but a collection of subsets of $\mathbb{N}$, one can classify transitive systems through Furstenberg families. The study of transitivity via Furstenberg families has helped in obtaining some very useful and applicable results for transivtive dynamical systems \cite{MR3365729, MR2831911}.  In \cite{MR3178028, MR2370619}, the authors have studied various dynamical properties of autonomous systems through Furstenberg families. Recently, some researchers have explored other dynamical properties also of non-autonomous discrete systems through Furstenberg families and have obtained some instresting results  \cite{MR3623040, MR3584037}. \\Recently, Sharma and Raghav have studied various dynamical properties of non-autonomous discrete dynamical systems in various settings \cite{sharma2017alterations, MR1, MR3661658}. Motivated by the work done in this direction, we study $\mathcal{F}$-transitive, $\mathcal{F}$-mixing and topologically ergodic non-autonomous systems.
\\In section 2, we give preliminaries required for the rest of the sections. In section 3, we study $\mathcal{F}$-transitivity for non-autonomous system generated by a uniformly convergent sequence of maps. We further give a result regarding the $\mathcal{F}$-transitivity of a non-autonomous system generated by a finite family of maps and some examples to support our result. Also, a relation between the $\mathcal{F}$-transitivity of the systems $(X,f_{1,\infty})$ and $(X,f_{k,\infty})$ is studied. In section 4, we obtain a relation between the $\mathcal{F}$-mixing of the systems $(X,f_{1,\infty})$ and $(X,f_{k,\infty})$. Further, we study  $\mathcal{F}$-mixing in a non-autonomous system generated by a finite family of maps giving some examples to support our results. In section 5, topologically ergodic non-autonomous systems are studied. We present a result for the topologically ergodic non-autonomous system generated by a finite family of maps and obtain a relation between the topologically ergodic systems $(X,f_{k,\infty})$ and $(X,f_{k,\infty})$.

\section{Preliminaries}
In this section we recall some well known notions.
\\Given a subset $A$ of a topological space $X$, int$(A)$ denote the interior of $A$ in $X$.
For any two open sets U and V of $X$, we denote, 
$N_{f_{1,\infty}}(U,V)= \{n\in \mathbb{N} : f_1^n(U)\cap V\ne \emptyset \}$.

\begin{defn}
A system $(X,f_{1,\infty})$ is said to $\textit{open}$ if for any open set $U$ in $X$, $f_{n}(U)$ is open for each $n\in\mathbb{N}$.
\end{defn}

\begin{defn}
A system $(X,f_{1,\infty})$ is said to $\textit{feeble open}$ if for any non-empty open set $U$ in $X$, $int(f_{n}(U))$ is non-empty for each $n\in\mathbb{N}$.
\end{defn}

\begin{defn}
The system $(X,f_{1,\infty})$ is said to be $\textit{topologically transitive}$ if for any two non-empty open sets $U$ and $V$ in $X$, there exists a positive integer $n\in \mathbb{N}$ such that, $f_{1}^n(U)\cap V \ne \emptyset$. Thus, the system $(X,f_{1,\infty})$ is said to be topologically transitive if for any two non-empty open sets $U_0$ and $V_0$ of $X$, $N_{f_{1,\infty}}(U_0,V_0)$ is non-empty.
\end{defn}

\begin{defn}
The system $(X,f_{1,\infty})$ is said to be $\textit{topologically mixing}$ if for any two non-empty open sets $U_0$ and $V_0$ in $X$, there exists a positive integer $N\in \mathbb{N}$ such that for any $n\geq N$, $U_n\cap V_0 \ne \emptyset$,
where $U_{i+1}=f_i(U_i)$ $ 1\leq i \leq n$ i.e. $(f_{n}of_{n-1}o\cdots of_1)(U_0)\cap V_0 \ne \emptyset$ for all  $ n\geq N $. Thus, the system $(X,f_{1,\infty})$ is said to be topologically mixing if for any two non-empty open sets $U_0$ and $V_0$ of $X$ , there is a positive integer N such that $N_{f_{1,\infty}}(U_0,V_0) \supset [N,\infty)\cap\mathbb{N}$.
\end{defn}

\begin{defn}
Let $|N_{f_{1,\infty}}(U,V)|$ be the cardinal number of the set $N_{f_{1,\infty}}(U,V)$, then
\[\limsup_{n \to \infty} \frac{|N_{f_{1,\infty}}(U,V)\cap N_n|}{n}\]  is called the upper density of $N_{f_{1,\infty}}(U,V)$, where $N_n=\{0,1,2,...,{n-1}\}$.
\end{defn}

\begin{defn}
The system $(X,f_{1,\infty})$ is said to be $\textit{topologically ergodic}$ if for any two non-empty open sets $U_0$ and $V_0$ in $X$, $N_{f_{1,\infty}}(U_0,V_0)$ has positive upper density.
\end{defn}

Let $(X,d)$ be a compact metric space and let $C(X)$ denote the collection of all continuous self-maps on $X$. For any $f,g\in C(X)$, the \textit{Supremum metric} is defined by \\$D(f,g)=\underset{x\in X}{sup}$ $d(f(x),g(x))$. It is easy to observe that a sequence $(f_n)$ in $C(X)$ converges to $f$ in $C((X),D)$ if and only if $f_n$ converges to f uniformly on $X$ and hence the topology generated by the \textit{Supremum metric} is called the topology of uniform convergence.
\\Let $\{f_n:n\in N\}$ be a family of continuous self maps on $X$. For any $k\in \mathbb{N}$, let $(f_{k,\infty})$ denote the family obtained by deleting first $k-1$ members. 

\bigskip Let $(X,d_X)$ and $(Y,d_Y)$ be compact metric spaces. For non-autonomous discrete dynamical systems $(X,f_{1,\infty})$ and $(Y,g_{1,\infty})$, put $(f_{1,\infty}\times g_{1,\infty})= (h_{1,\infty}) = (h_1,h_2,\ldots,h_n,\ldots)$, where $h_n=f_n\times g_n$ , for each $n\in \mathbb{N}$. Thus, $(X\times Y, f_{1,\infty}\times g_{1,\infty})$ is a non-autonomous dynamical system, where $(X\times Y)$ is a compact metric space endowed with the product metric $d_{X\times Y}((x,y),(x',y'))= d_X(x,x')+d_Y(y,y')$. Here, $h_{1}^{n}=h_n\circ h_{n-1}\circ \cdots \circ h_2\circ h_1 = (f_n\times g_n)\circ (f_{n-1}\times g_{n-1})\circ \cdots \circ (f_2\times g_2)\circ (f_1\times g_1)$ \cite{MR3584171}.

Now, we recall some concepts related to Furstenberg families.
Let $\mathcal{P}$ be the collection of all subsets of $\mathbb{Z^+}$. A collection $\mathcal{F}\subseteq \mathcal{P}$ is called a $\mathit{Furstenberg\hspace{0.15cm} family}$, if it is hereditary upwards, that is, $F_1 \subset F_2$ and $F_1\in \mathcal{F}$ implies $F_2\in \mathcal{F}$. A family $\mathcal{F}$ is proper if it is a proper subset of $\mathcal{P}$. Throughout this paper, all Furstenberg families are considered to be proper.
\\For a Furstenberg family $\mathcal{F}$, the dual family\begin{align*}
\mathit{k} \mathcal{F} = \{F\in \mathcal{P}:F\cap F'\neq  \emptyset, for\hspace{0.15cm} all\hspace{0.15cm} F'\in \mathcal{F}\}=\{F\in \mathcal{P}: \mathbb{Z^+}\setminus F\notin \mathcal{F}\}
\end{align*} 
Clearly, if $\mathcal{F}$ is a Furstenberg family, then so is $\mathit{k}\mathcal{F}$. One can note that $\mathit{k}(\mathit{k}\mathcal{F})=\mathcal{F}$. Note that the family $\mathcal{B}$ of all infinite subsets of $\mathbb{Z^+}$ is a Furstenberg family and $\mathit{k}\mathcal{B}$ is the family of all cofinite subsets of $\mathbb{Z^+}$. The family of all syndetic sets, the family of all thick sets are some examples of Furstenberg families.
\\For Furstenberg families $\mathcal{F}_1$ and $\mathcal{F}_2$, let $\mathcal{F}_1.\mathcal{F}_2=\{F_1 \cap F_2: F_1 \in \mathcal{F}_1, F_2 \in \mathcal{F}_2\}$. A Furstenberg family $\mathcal{F}$ is said to be filterdual if $\mathcal{F}$ is proper and $\mathit{k}\mathcal{F}.\mathit{k}\mathcal{F}\subseteq \mathit{k}\mathcal{F}$. A Furstenberg family $\mathcal{F}$ is said to be translation invariant if for any $F\in \mathcal{F}$ and any $i\in \mathbb{Z^+}, \hspace{0.15cm} F+i \in \mathcal{F}$ and $F-i\in \mathcal{F}$.
\\ The following two concepts have been defined and studied by various researchers for autonomous discrete dynamical systems. We define them for the non-autonomous discrete dynamical systems.
\begin{defn}
The system $(X,f_{1,\infty})$ is said to be $\mathcal{F}-\textit{transitive}$ if for any two non-empty open sets $U$ and $V$ in $X$, $N_{f_{1,\infty}}(U,V)$ $\in \mathcal{F}$.
\end{defn}

\begin{defn}
The system $(X,f_{1,\infty})$ is said to be $\mathcal{F}-\textit{mixing}$ if the product $(X\times X, f_{1,\infty}\times f_{1,\infty})$ is $\mathcal{F}-\textit{transitive}$.
\end{defn}

Let $X$ be a topological space and $\pazocal{K}$($X)$ denote the hyperspace of all non-empty compact subsets of $X$ endowed with the \textit{Vietoris Topology}. A basis of open sets for Vietoris topology is given by following sets:
\bigskip

\noindent $< U_1, U_2, \ldots , U_k >$ = $\{K \in$ $\pazocal{K}$($X)$: $K \subseteq \bigcup_{i=1}^{k} U_{i}$ and $K\cap U_{i}$  $\ne \emptyset$, for each $i$ $\in \{1, 2, \ldots ,k\}$\},

\bigskip\noindent where $U_1, U_2, \ldots ,U_k$ are non-empty open subsets of $X$.

Given metric space $(X, d)$, a point $x \in X$ and $A \in \pazocal{K}(X)$, let $d(x, A)$ = $\inf \{d(x, a): a \in A \}$. For every $\epsilon > 0$, let  open $d$-ball in $X$ about $A$ and radius $\epsilon$ be given by  $B_{d}(A, \epsilon) = \{x \in X: d(x, A) < \epsilon\} = \bigcup_{a \in A} B_d(a, \epsilon)$, where $B_d(a, \epsilon)$ denotes the open ball in $X$ centred at $a$ and of radius $\epsilon$. The Hausdorff metric on $\pazocal{K}$($X)$  induced by $d$, denoted by $d_H$, is defined  as follows:

\ \ \ \ \ \ \ \ \ \ \ \ \ \ \ \ \ $d_H$($A, B) = \inf \{ \epsilon > 0: A \subseteq B_{d}(B, \epsilon) \  \text{and} \  B \subseteq B_{d}(A, \epsilon)\}$,

\bigskip\noindent where $A$, $B \in$ $\pazocal{K}$($X)$. We shall recall that the topology induced by the Hausdorff metric coincides with the Vietoris topology if and only if the space $X$ is compact. Also, for a compact metric space $X$ and  $A, B \in$ $\pazocal{K}$($X)$, we get that $d_H$($A, B) < \epsilon$ if and only if $A \subseteq B_d(B, \epsilon)$ and $B \subseteq B_d(A, \epsilon)$.
\\Let $\pazocal{F}(X)$ denote the set of all finite subsets of $X$. Under Vietoris topology, $\pazocal{F}(X)$ is dense in $\pazocal{K}(X)$ \cite{MR2665229, MR1269778}.
 Given a continuous function $f: X \to X$, it induces a continuous function $\overline{f}$: $\pazocal{K}$($X) \to$ $\pazocal{K}$($X)$ defined by $\overline{f}(K) = f(K)$, for every $K \in$ $\pazocal{K}$($X)$, where $f(K)$ = $\{f(k) : k\in K\}$. Note that continuity of $f$ implies continuity of $\overline{f}$.

\bigskip Let $(X, f_{1,\infty})$ be a non-autonomous discrete dynamical system and $\overline{f}_n$ be the induced function on $\pazocal{K}$($X)$, by $f_n$ on $X$, for every $n\in\mathbb{N}$. Then the sequence $\overline{f}_{1,\infty}$ = ($\overline{f}_1, \overline{f}_2$, $\ldots ,\overline{f}_n, \ldots )$ induces a non-autonomous discrete dynamical system ($\pazocal{K}$($X), \overline{f}_{1,\infty})$, where $\overline{f}_1^n = \overline{f}_n \circ \ldots \circ \overline{f}_2\circ \overline{f}_1$.  Note that $\overline{f}_1^n = \overline{f^n_1}$.

In \cite{MR1}, authors have proved several results for topological transitivity, weak mixing, topological mixing, sensitive dependence on initial conditions and cofinite sensitivity using the following results for the system $(X,f_{1,\infty})$:
\begin{prop} \cite{MR1}
Let $(X,f_{1,\infty})$ be a N.D.S generated by a family $f_{1,\infty}$ and let $f$ be any continuous self map on $X$. If the family $f_{1,\infty}$ commutes with $f$ then for any $x\in X$ and any $k\in\mathbb{N}$, $d(f_{1}^{k}(x),f^{k}(x)) \leq \sum_{i=1}^k D(f_i,f)$.
\end{prop}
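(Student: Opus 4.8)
The plan is to establish the bound by a telescoping estimate along a family of ``mixed'' orbits, using the commutativity hypothesis to reduce each elementary step to a comparison of $f_j$ with $f$ at a single point. Fix $x\in X$ and $k\in\mathbb{N}$, and for $0\le j\le k$ set $w_j = f^{k-j}(f_1^{j}(x))$, so that $w_0=f^{k}(x)$ and $w_k=f_1^{k}(x)$. By the triangle inequality,
\[
d(f_1^{k}(x),f^{k}(x)) = d(w_k,w_0) \le \sum_{j=1}^{k} d(w_j,w_{j-1}),
\]
and it remains to show $d(w_j,w_{j-1})\le D(f_j,f)$ for each $j\in\{1,\dots,k\}$.

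For the elementary step, write $f_1^{j}(x)=f_j(f_1^{j-1}(x))$, so that $w_j=f^{k-j}(f_j(f_1^{j-1}(x)))$, while $w_{j-1}=f^{k-j+1}(f_1^{j-1}(x))=f(f^{k-j}(f_1^{j-1}(x)))$. Since $f_j$ commutes with $f$, it commutes with every power $f^{m}$ (a trivial induction on $m$), so $f^{k-j}\circ f_j=f_j\circ f^{k-j}$. Putting $q=f^{k-j}(f_1^{j-1}(x))$, these identities give $w_j=f_j(q)$ and $w_{j-1}=f(q)$; hence
\[
d(w_j,w_{j-1}) = d(f_j(q),f(q)) \le \sup_{y\in X} d(f_j(y),f(y)) = D(f_j,f).
\]
Summing over $j$ then yields the assertion.

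The essential point --- and the only place where the hypothesis is used --- is the commutation $f^{k-j}\circ f_j=f_j\circ f^{k-j}$. Without it, the natural ``peel off the outermost map'' argument would force one to estimate a quantity of the form $d(g(a),g(b))$ for some iterate $g$ built from the $f_i$ or from $f$; since none of these maps is assumed Lipschitz (let alone non-expansive), no such estimate is available, so this is where the real difficulty lies. Commutativity resolves it by letting us push all the ``outer'' dynamics past the single map being compared, so that every term of the telescope is literally a value of the supremum metric $D(f_j,f)$. The same bookkeeping can alternatively be arranged as an induction on $k$ applied to the shifted system $f_{2,\infty}$, invoking commutativity once at the last step; everything else is just the triangle inequality and the definition of $D$, so I anticipate no further obstacles.
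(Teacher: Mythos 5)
Your proof is correct and is essentially the argument given for this result in the cited source \cite{MR1}: a telescoping sum over the mixed orbit points $f^{k-j}(f_1^{j}(x))$, with the commutativity hypothesis used exactly once per step to reduce each increment to $d(f_j(q),f(q))\le D(f_j,f)$. The inductive formulation you sketch at the end is the same argument in disguise, so there is nothing to add.
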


\begin{Cor} \cite{MR1}
Let $(X,f_{1,\infty})$ be a N.D.S generated by a family $f_{1,\infty}$ and let $f$ be any continuous self map on $X$. If the family $f_{1,\infty}$ commutes with $f$ then for any $x\in X$ and any $k\in\mathbb{N}$, $d(f_{1}^{n+k}(x),f^{k}(f_{1}^{n}(x))) \leq \sum_{i=1}^k D(f_{i+1},f)$.
\end{Cor}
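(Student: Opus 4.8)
The plan is to obtain this directly from Proposition 2.1 by applying that proposition not to $f_{1,\infty}$ itself but to the shifted (``tail'') family $f_{n+1,\infty}$, evaluated at the point $f_1^n(x)$.

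First I would fix notation for the tail family: write $g_{1,\infty}=f_{n+1,\infty}$, so that $g_i=f_{n+i}$ for each $i\in\mathbb{N}$ and $g_1^k=g_k\circ\cdots\circ g_1=f_{n+k}\circ\cdots\circ f_{n+1}$. The key elementary observation is then that, for every $x\in X$, $g_1^k\big(f_1^n(x)\big)=f_{n+k}\circ\cdots\circ f_{n+1}\circ f_n\circ\cdots\circ f_1(x)=f_1^{n+k}(x)$, simply by associativity of composition.

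Second, I would note that the hypothesis passes to the tail: since $f$ commutes with every $f_j$, it in particular commutes with $f_{n+1},f_{n+2},\dots$, i.e.\ the family $g_{1,\infty}$ commutes with $f$. Hence Proposition 2.1, applied to the non-autonomous system $(X,g_{1,\infty})$ and the map $f$ at the base point $y=f_1^n(x)$ with exponent $k$, gives $d\big(g_1^k(y),f^k(y)\big)\le\sum_{i=1}^{k}D(g_i,f)=\sum_{i=1}^{k}D(f_{n+i},f)$. Substituting $g_1^k(y)=f_1^{n+k}(x)$ and $y=f_1^n(x)$ from the first step yields $d\big(f_1^{n+k}(x),f^k(f_1^n(x))\big)\le\sum_{i=1}^{k}D(f_{n+i},f)$, which is the desired estimate.

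I do not anticipate any genuine obstacle: the statement is a straightforward consequence of Proposition 2.1 after the index shift, and the only point needing (routine) care is the bookkeeping of composition order and of the index in the sum $\sum_i D(g_i,f)$. A self-contained alternative would be to re-run the induction on $k$ behind Proposition 2.1 while carrying the fixed prefix $f_1^n$ along — using that $f$ commutes with $f_1^n$, which follows by induction from commuting with each $f_j$ — together with the triangle inequality; but the reduction above is shorter and avoids repeating that argument.
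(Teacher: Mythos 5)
Your reduction is correct and is the natural argument: shift to the tail family $g_{1,\infty}=f_{n+1,\infty}$, check that commutation with $f$ is inherited, and apply Proposition 2.1 at the base point $f_1^n(x)$. The paper itself offers no proof to compare against --- the corollary is quoted from \cite{MR1} --- so your write-up stands on its own, and it is sound. One point you should not gloss over, however: the bound you actually derive is $\sum_{i=1}^{k}D(f_{n+i},f)$, whereas the statement as printed reads $\sum_{i=1}^{k}D(f_{i+1},f)$; these agree only when $n=1$, so it is not literally ``the desired estimate.'' The discrepancy is almost certainly a typo in the statement rather than a flaw in your argument: in the proof of Theorem 3.1 the corollary is invoked precisely in the form $d(f_1^{r+k}(u),f^k(f_1^r(u)))\le\sum_{i=1}^{k}D(f_{r+i},f)$, and that is the only version strong enough to be controlled by the tail condition $\sum_{i=r}^{\infty}D(f_i,f)<\epsilon/2$. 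So your proof establishes the version of the corollary that the paper actually needs; just state explicitly that you are proving the corrected bound, rather than asserting it coincides with the printed one.
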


\section{$\mathcal{F}$-transitivity for Non-autonomous Discrete Dynamical Systems}
In this section, we prove results for $\mathcal{F}$-transitivity of  non-autonomous discrete dynamical systems. The next theorem gives a necessary and sufficient condition for $\mathcal{F}$-transitivity of the non-autonomous systems $(X,f_{1,\infty})$ and its corresponding autonomous system $(X,f)$.
 
\begin{thm} Let $(X,f_{1,\infty})$ be a N.D.S generated by a family $f_{1,\infty}$ of feeble open maps commuting with $f$ such that $\sum_{i=1}^\infty D(f_i,f) < \infty $ and $\mathcal{F}$ be a filterdual and translation invariant Furstenberg family. Then $(X,f)$ is $\mathcal{F}$-transitive if and only if $(X,f_{1,\infty})$ is $\mathcal{F}$-transitive.
\end{thm}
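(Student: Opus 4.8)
The plan is to move $\mathcal F$‑transitivity between the two systems by approximating the non‑autonomous orbit map $f_1^{\,n}$ by $f^{\,n-k}\circ f_1^{\,k}$ for a large $k$, using feeble‑openness to keep a nonempty interior under the ``finite part'' $f_1^{\,k}$, and using translation invariance together with upward heredity of $\mathcal F$ to pass from a set known to lie in $\mathcal F$ to the one we need. First I would record two facts. (1) For every $\varepsilon>0$ there is $k\in\mathbb N$ with $d\!\left(f_1^{\,k+m}(x),\,f_1^{\,k}(f^{\,m}(x))\right)<\varepsilon$ for all $m\ge 0$ and $x\in X$: the tail family $f_{k+1,\infty}$ still commutes with $f$, so Proposition~2.1 applied to it gives $d\!\left((f_{k+m}\circ\cdots\circ f_{k+1})(y),\,f^{\,m}(y)\right)\le\sum_{i>k}D(f_i,f)$, and this tail is $<\varepsilon$ once $k$ is large; now put $y=f_1^{\,k}(x)$ and use $f^{\,m}\circ f_1^{\,k}=f_1^{\,k}\circ f^{\,m}$, which holds because each $f_i$, hence $f_1^{\,k}$, commutes with $f$. (2) A finite composition of feeble‑open maps is feeble open, so $\operatorname{int}\!\left(f_1^{\,k}(W)\right)\ne\emptyset$ for every nonempty open $W$ and every $k$.

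\noindent For the implication ``$(X,f)$ $\mathcal F$‑transitive $\Rightarrow$ $(X,f_{1,\infty})$ $\mathcal F$‑transitive'' I would argue directly. Fix nonempty open $U,V$, choose $v_0\in V$ and $\varepsilon>0$ with $B_d(v_0,2\varepsilon)\subseteq V$, set $V'=B_d(v_0,\varepsilon)$, take $k$ as in (1) for this $\varepsilon$, and put $W=\operatorname{int}(f_1^{\,k}(U))\ne\emptyset$. Since $(X,f)$ is $\mathcal F$‑transitive, $N_f(W,V')\in\mathcal F$, where $N_f(A,B)=\{n:f^{\,n}(A)\cap B\ne\emptyset\}$. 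If $m\in N_f(W,V')$, pick $w_0\in W\subseteq f_1^{\,k}(U)$ with $f^{\,m}(w_0)\in V'$ and write $w_0=f_1^{\,k}(u_0)$, $u_0\in U$; then $f^{\,m}(w_0)=f_1^{\,k}(f^{\,m}(u_0))$, so by (1) $d\!\left(f_1^{\,k+m}(u_0),f^{\,m}(w_0)\right)<\varepsilon$, whence $f_1^{\,k+m}(u_0)\in B_d(v_0,2\varepsilon)\subseteq V$, i.e.\ $k+m\in N_{f_{1,\infty}}(U,V)$. Thus $N_f(W,V')+k\subseteq N_{f_{1,\infty}}(U,V)$; the left side lies in $\mathcal F$ by translation invariance, so $N_{f_{1,\infty}}(U,V)\in\mathcal F$ by upward heredity.

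\noindent For the converse I would argue by contraposition. Suppose $(X,f)$ is not $\mathcal F$‑transitive: there are nonempty open $U,V$ with $E:=\mathbb Z^{+}\setminus N_f(U,V)=\{n:f^{\,n}(U)\subseteq X\setminus V\}\in k\mathcal F$. Choose $k$ as in (1) with $\tau:=\sum_{i>k}D(f_i,f)$ as small as we wish, and set $Q=\{y:d(y,f_1^{\,k}(X\setminus V))\le\tau\}$, a closed set. For $n\in E$ we have $f^{\,n}(U)\subseteq X\setminus V$, so by (1) and $f^{\,n}\circ f_1^{\,k}=f_1^{\,k}\circ f^{\,n}$ we get $f_1^{\,k+n}(U)\subseteq Q$. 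Hence, with $W=\operatorname{int}(X\setminus Q)$, every $m\in E+k$ satisfies $f_1^{\,m}(U)\cap W=\emptyset$, so $E+k\subseteq\mathbb Z^{+}\setminus N_{f_{1,\infty}}(U,W)$; since $k\mathcal F$ is translation invariant and hereditary upwards, this forces $\mathbb Z^{+}\setminus N_{f_{1,\infty}}(U,W)\in k\mathcal F$, i.e.\ $N_{f_{1,\infty}}(U,W)\notin\mathcal F$, so $(X,f_{1,\infty})$ is not $\mathcal F$‑transitive — \emph{provided} $W\ne\emptyset$. (Equivalently one may route this through the tail system $(X,f_{k,\infty})$, which for large $k$ is uniformly $\tau$‑close to $(X,f)$, and through the relation between $\mathcal F$‑transitivity of $(X,f_{1,\infty})$ and of $(X,f_{k,\infty})$ — one half of which is the feeble‑open computation $N_{f_{k,\infty}}(\operatorname{int}(f_1^{\,k-1}(U)),V)+(k-1)\subseteq N_{f_{1,\infty}}(U,V)$, the other half meeting exactly the point below.)

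\noindent I expect the genuine obstacle to be precisely ensuring $W=\operatorname{int}(X\setminus Q)\ne\emptyset$, i.e.\ that $f_1^{\,k}(X\setminus V)$ is not $\tau$‑dense in $X$. In the forward direction the finite part $f_1^{\,k}$ is applied to the \emph{source} set, where feeble‑openness supplies a nonempty interior; in the reverse direction one must in effect pull the \emph{target} back through the generally non‑injective composition $f_1^{\,k}$ (a point of $f^{\,n}(U)$ whose image under $f_1^{\,k}$ lands in $f_1^{\,k}(V)$ need not itself lie in $V$), and passing to complements inside $k\mathcal F$ is what trades that preimage for a union. The delicate case that remains is the one in which the finite compositions $f_1^{\,k}$ spread $X\setminus V$ over a $\tau_k$‑dense subset of $X$ for every $k$ (with $\tau_k\to 0$), so that no complementary open witness is immediately available; ruling this out, or circumventing it, is where the feeble‑openness of the maps, the filterdual structure of $\mathcal F$ (allowing several scales $k$ to be combined with the intersection staying in $k\mathcal F$), and the summability $\sum_i D(f_i,f)<\infty$ together with the commutation with $f$ must all be used, and this is the heart of the argument.
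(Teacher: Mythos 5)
Your forward implication is correct and is essentially the paper's own argument: push $U$ forward through $f_1^{\,k}$ (feeble openness gives a nonempty interior $W$), apply $\mathcal F$-transitivity of $(X,f)$ to $(W,V')$ with a shrunken target, use the tail estimate from Corollary~2.1 to conclude $N_f(W,V')+k\subseteq N_{f_{1,\infty}}(U,V)$, and finish with translation invariance and upward heredity.

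The converse, however, is not a proof: you set up a contrapositive via the set $Q=\{y:d(y,f_1^{\,k}(X\setminus V))\le\tau\}$ and then concede that you cannot ensure $W=\operatorname{int}(X\setminus Q)\ne\emptyset$. That concession is fatal to the route you chose -- if $f_1^{\,k}$ restricted to $X\setminus V$ has ($\tau$-)dense image, which feeble openness does nothing to prevent, then $Q=X$ and there is no witnessing open set -- and the "heart of the argument" you defer to is not actually where the paper goes. The idea you are missing is simply to pull the \emph{source} back rather than push the \emph{target's complement} forward: fix $B(x,\epsilon)$, $B(y,\epsilon)$, choose $r$ with $\sum_{i\ge r}D(f_i,f)<\epsilon/2$, and apply $\mathcal F$-transitivity of $(X,f_{1,\infty})$ to the open pair $U'=(f_1^{\,r})^{-1}\bigl(B(x,\epsilon)\bigr)$ and $V'=B(y,\epsilon/2)$. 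If $f_1^{\,r+k}(u)\in V'$ for some $u\in U'$, then Corollary~2.1 gives $d\bigl(f_1^{\,r+k}(u),f^{\,k}(f_1^{\,r}(u))\bigr)<\epsilon/2$, and since $f_1^{\,r}(u)\in B(x,\epsilon)$ this yields $f^{\,k}\bigl(B(x,\epsilon)\bigr)\cap B(y,\epsilon)\ne\emptyset$; hence $N_{f_{1,\infty}}(U',V')-r\subseteq N_f\bigl(B(x,\epsilon),B(y,\epsilon)\bigr)$ and translation invariance plus heredity conclude. This completely sidesteps the non-injectivity problem you wrestle with, and no passage to $k\mathcal F$ or combination of scales is needed; indeed the paper's proof of both directions uses only heredity and translation invariance, never filterduality, so your hope that filterduality rescues the density issue is not how the argument closes. (The preimage $U'$ does need to be nonempty -- a point the paper silently assumes -- but that is a far milder issue than the one your construction faces.)
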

\begin{proof} Let $(X,f)$ be $\mathcal{F}$-transitive, $\epsilon > 0$ be given and $U = B(x,\epsilon)$ and $V = B(y,\epsilon)$ be two non-empty open sets in $X$. As $\sum_{i=1}^\infty D(f_i,f) < \infty$, there exists $r$ such that $\sum_{i=r}^\infty D(f_i,f) < \epsilon/2$. As the family $f_{1,\infty}$ consists of feeble open maps, therefore $f_{1}^{r}(U)$ has non-empty interior. Let $U' = \text{int}(f_{1}^{r}(U))$ and $V' = B(y,\epsilon/2)$. Clearly, $U'$ and $V'$ are non-empty open sets in $X$ and $(X,f)$ being $\mathcal{F}$- transitive implies $N_{f}(U',V')\in \mathcal{F}$. 
Let $m \in N_{f}(U',V')$ so there exists $u' \in U'$ such that $f^m(u') \in V'$.
Now since $U'=\text{int}(f_{1}^{r}(U))$ therefore there exists $u \in U$ such that $u'= f_{1}^{r}(u)$ therefore, $f^m(f_{1}^{r}(u)) \in V'$.
Also by Corollary 2.1, we have, 
\begin{align} d(f_{1}^{m+r}(U), f^m(f_{1}^{r}(U)))&\leq  \sum_{i=1}^{m} D(f_i,f) < \epsilon/2 \nonumber \end{align} therefore by triangle inequality, we get, \begin{align} d(y,f_{1}^{m+r}(U)) &\leq d(y,f_{1}^{r}(U)) +d(f^m(f_{1}^{r}),f_{1}^{m+r}(U)) < \epsilon\nonumber\end{align} which implies $f_{1}^{m+r}(U) \cap V  \neq \emptyset$ hence $m+r \in N_{f_{1,\infty}}(U,V)$ and we get $N_f(U',V')+r \subseteq N_{f_{1,\infty}}(U,V)$ implying $N_{f_{1,\infty}}(U,V)\in \mathcal{F}$.
Thus, $(X,f_{1,\infty})$ is $\mathcal{F}$-transitive.

Conversely, let $\epsilon > 0$ be given and let $B(x,\epsilon)$ and $B(y,\epsilon)$ be two non-empty open sets in $X$. As $\sum_{i=1}^\infty D(f_i,f) < \infty $, choose $r\in \mathbb{N}$ such that $\sum_{i=r}^\infty D(f_i,f) < \epsilon/2$. Further, the $\mathcal{F}$-transitivity of the system $(X,f_{1,\infty})$ ensures that any non-empty open set $U$ visits $B(y,\epsilon)$ infinitely many times.
Applying $\mathcal{F}$-transitivity of $(X,f_{1,\infty})$ to open sets $U=(f_{1}^{r})^{-1}B(x,\epsilon)$ and $V=B(y,\epsilon/2)$, we get that $N_{f_{1,\infty}}(U,V)\in \mathcal{F}$. Choose $k$ such that $f_{1}^{r+k}(U) \cap V \neq \phi$. Consequently, there exists $u \in U$ such that $d(f_{1}^{r+k}(u),y) < \epsilon/2$. Also by Corollary 2.1, we have \begin{align}d(f_{1}^{r+k}(u),f^k(f_{1}^{r}(u))) < \sum_{i=1}^{k} D(f_{r+i},f) < \epsilon/2\nonumber\end{align} and therefore by triangle inequality \begin{align}d(y,f^k(f_{1}^{r}(u))) < \epsilon.\nonumber\end{align} As $f_{1}^{r}(u) \in B(x,\epsilon)$, we have $(f^k B(x,\epsilon)) \cap B(y,\epsilon) \neq \emptyset$ implying \begin{align}(N_{f_{1,\infty}}(U,V)-r) \subseteq N_f(B(x,\epsilon),B(y,\epsilon))\nonumber\end{align} therefore $N_f(B(x,\epsilon),B(y,\epsilon))\in \mathcal{F}$.
Hence, $(X,f)$ is $\mathcal{F}$-transitive.

\end{proof}

The following example justifies the necessity of the conditions in the hypothesis of Theorem 3.1. 

\begin{exm}\end{exm}

Let $I$ be the unit interval [0,1] and $\mathcal{F}$ be the family of all infinite subsets of $\mathbb{N}$. Let $f, g$ be defined on I by: \[ f(x) = \begin{cases}
2x, & \text{for} \ x \in \left[0, \frac{1}{4}\right] \\
1/2, & \text{for} \ x \in \left[\frac{1}{4}, \frac{1}{2}\right] \\
x, & \text{for} \ x \in \left[\frac{1}{2}, 1\right]  
\end{cases} \]

 \[ g(x) = \begin{cases}
1/2-2x,  & \text{for} \ x \in \left[0, \frac{1}{4}\right] \\
4x-1, & \text{for} \ x \in \left[\frac{1}{4},\frac{1}{2}\right] \\
-2-2x, & \text{for} \ x \in \left[\frac{1}{2},1\right]
\end{cases} \]
Take $f_1(x)=f(x)$ and $f_n(x)=g(x)$, for all $n>1$. Then ${(f_{n})}_{n=1}^{\infty}$ converges uniformly to $g$. Note that g is feeble open. Clearly, $(X,g)$ is $\mathcal{F}$-transitive. However, the system $(X,f_{1,\infty})$ is not $\mathcal{F}$-transitive because for open sets $U=(1/4,1/2)$ and $V=(1/2,3/4)$, $\mathbb{N}_{f_{1,\infty}}(U,V)$, is empty and hence it is not $\mathcal{F}$-transitive. Our Theorem 3.1 doesn't hold true here because in the family $(f_{n})$, the function $f_1$ is not feeble open.

\begin{thm}
Let $(X,f_{1,\infty})$ be a N.D.S generated by a family $f_{1,\infty}$ of feeble open maps, then $(X,f_{1,\infty})$ is $\mathcal{F}$-transitive if and only if $(X,f_{k,\infty})$ is $\mathcal{F}$-transitive.
\end{thm}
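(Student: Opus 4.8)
The plan is to relate the sets $N_{f_{1,\infty}}(U,V)$ and $N_{f_{k,\infty}}(U',V)$ by a bounded index shift, plus a preimage trick to pass between $U$ and a suitable open set $U'$, and then invoke translation invariance of $\mathcal{F}$ to conclude. The key structural fact is that $f_1^{n+k-1} = f_k^n \circ f_1^{k-1}$, i.e. running the full cocycle for $n+k-1$ steps is the same as first running the first $k-1$ maps and then running the tail cocycle $f_{k,\infty}$ for $n$ steps. The feeble-openness hypothesis is what lets us replace $f_1^{k-1}(U)$ by an honest nonempty open set.

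\medskip

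\noindent\textbf{Step 1 (forward direction).} Suppose $(X,f_{k,\infty})$ is $\mathcal{F}$-transitive and let $U,V$ be nonempty open in $X$. Since $f_1,\dots,f_{k-1}$ are feeble open, $U' := \operatorname{int}(f_1^{k-1}(U))$ is a nonempty open set, so $N_{f_{k,\infty}}(U',V)\in\mathcal{F}$. For any $n$ in this set there is $u'\in U'$ with $f_k^n(u')\in V$; writing $u'=f_1^{k-1}(u)$ for some $u\in U$ gives $f_1^{n+k-1}(u) = f_k^n(f_1^{k-1}(u)) \in V$, hence $n+k-1 \in N_{f_{1,\infty}}(U,V)$. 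Thus $N_{f_{k,\infty}}(U',V) + (k-1) \subseteq N_{f_{1,\infty}}(U,V)$; since $\mathcal{F}$ is translation invariant the left side lies in $\mathcal{F}$, and heredity upward of $\mathcal{F}$ gives $N_{f_{1,\infty}}(U,V)\in\mathcal{F}$.

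\medskip

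\noindent\textbf{Step 2 (converse direction).} Suppose $(X,f_{1,\infty})$ is $\mathcal{F}$-transitive and let $U,V$ be nonempty open. Put $W := (f_1^{k-1})^{-1}(U)$, which is open, and if nonempty apply $\mathcal{F}$-transitivity of $(X,f_{1,\infty})$ to $W$ and $V$ to get $N_{f_{1,\infty}}(W,V)\in\mathcal{F}$. For $n+k-1$ in this set (we may assume all elements exceed $k-1$ since $\mathcal{F}$ is proper — actually this needs the mild observation that $N_{f_{1,\infty}}(W,V)\cap[k,\infty)$ still lies in $\mathcal{F}$, which follows from translation invariance applied twice, or simply from the fact that deleting finitely many points of a set in a translation-invariant family keeps it in the family) there is $w\in W$ with $f_1^{n+k-1}(w)\in V$; then $f_1^{k-1}(w)\in U$ and $f_k^n(f_1^{k-1}(w)) = f_1^{n+k-1}(w)\in V$, so $n\in N_{f_{k,\infty}}(U,V)$. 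Hence $N_{f_{1,\infty}}(W,V) - (k-1) \subseteq N_{f_{k,\infty}}(U,V)$, and translation invariance together with heredity give $N_{f_{k,\infty}}(U,V)\in\mathcal{F}$.

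\medskip

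\noindent\textbf{Expected main obstacle.} The only genuinely delicate points are bookkeeping rather than conceptual: (i) ensuring the shifted sets actually sit inside $N_{f_{1,\infty}}(U,V)$ resp. $N_{f_{k,\infty}}(U,V)$ with the correct offset $k-1$, being careful that indices are positive integers so that $f_k^n$ makes sense for $n\geq 1$; and (ii) justifying that one may discard the finitely many elements of $N_{f_{1,\infty}}(W,V)$ that are $\leq k-1$ while staying in $\mathcal{F}$ — this is where translation invariance (or an explicit appeal to $\mathcal{F}$ being a proper hereditary-upward family closed under the relevant finite modifications) is used. No convergence hypothesis $\sum D(f_i,f)<\infty$ is needed here, since only finitely many maps $f_1,\dots,f_{k-1}$ are being peeled off; feeble openness of those finitely many maps is the sole reason that hypothesis appears.
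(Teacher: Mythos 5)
Your proposal follows essentially the same route as the paper: push $U$ forward through $f_1^{k-1}$ (using feeble openness to get a nonempty open $U'=\operatorname{int}(f_1^{k-1}(U))$) in one direction, pull $U$ back through $(f_1^{k-1})^{-1}$ in the other, and compare the hitting-time sets. The difference is that you execute the bookkeeping correctly where the paper does not. The paper's converse direction asserts that $m\in N_{f_{k,\infty}}(U^*,V)$ implies $m\in N_{f_{1,\infty}}(U,V)$ and hence that $N_{f_{k,\infty}}(U^*,V)\subseteq N_{f_{1,\infty}}(U,V)$; in fact, as you observe, one only gets $m+k-1\in N_{f_{1,\infty}}(U,V)$, i.e.\ a containment after translating by $k-1$. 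Your appeal to translation invariance of $\mathcal{F}$ is therefore genuinely needed --- but note that translation invariance is \emph{not} among the hypotheses of this theorem (unlike Theorem~3.1), so your proof, while more honest, proves a slightly different statement; without some such invariance the theorem as stated is false (take $\mathcal{F}=\{F\subseteq\mathbb{Z}^+:1\in F\}$, which is proper and hereditary upward but for which $\mathcal{F}$-transitivity of $(X,f_{1,\infty})$ and of $(X,f_{k,\infty})$ are unrelated conditions on single maps). The paper's forward direction is moreover essentially vacuous as written (``we get $m'\in N_{f_{1,\infty}}(U^*,V)$ such that $m'\in N_{f_{1,\infty}}(U^*,V)$''), whereas your Step~2 supplies the actual argument.

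One gap remains in your Step~2, which you flag but do not close: $W=(f_1^{k-1})^{-1}(U)$ may be empty (nothing forces $U$ to meet the image of $f_1^{k-1}$, even when $(X,f_{1,\infty})$ is transitive, since $f_1^n(X)$ for large $n$ need not lie in $f_1^{k-1}(X)$). In that case the pullback trick gives no information about $N_{f_{k,\infty}}(U,V)$ and the argument stalls. The paper silently has the same defect, so this is a criticism of the theorem's proof strategy rather than of your proposal specifically, but a complete proof would need either an additional hypothesis (e.g.\ each $f_n$ surjective, or having dense image) or a different idea for such $U$. Your closing remark that no convergence hypothesis $\sum_i D(f_i,f)<\infty$ is needed here is correct and consistent with the paper.
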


\begin{proof}
Let $(X,f_{1,\infty})$ be $\mathcal{F}$-transitive and $U,V$ be two non-empty open sets in $X$. Since $(X,f_{1,\infty})$ is $\mathcal{F}$-transitive, therefore for open sets $U^* =(f_1^{k-1})^{-1}(U)$ and $V$, $N_{f_{1,\infty}}(U^*,V)\in \mathcal{F}$.  Since $\mathcal{F}$-transitive implies transitive, so we get that the set $N_{f_{1,\infty}}(U^*,V)$ is infinite. Thus, we get an $m'\in N_{f_{1,\infty}}(U^*,V)$ such that $m'\in N_{f_{1,\infty}}(U^*,V)$ and hence $(X,f_{k,\infty})$ is $\mathcal{F}$-transitive.
\\ Conversely, suppose that $(X,f_{k,\infty})$ is $\mathcal{F}$-transitive and $U,V$ be two non-empty pen sets in $X$. We know that the family $(f_{1,\infty})$ is feeble open, therefore, $f_1^{k-1}(U)$ has non-empty interior. We denote $int(f_1^k(U))$ by $U^*$. Now, by $\mathcal{F}$-transitivity of $(X,f_{k,\infty})$, we have that $N_{f_{k,\infty}}(U^*,V)\in \mathcal{F}$. Therefore, for any $m \in N_{f_{k,\infty}}(U^*,V)$, $f_k^m(U^*)\cap V$ is non-empty and hence we have $f_k^m(f_1^{k-1}(U)\cap V)$ is non-empty and thus $m \in N_{f_{1,\infty}}(U,V)$. So, we have $N_{f_{k,\infty}}(U^*,V) \subseteq N_{f_{1,\infty}}(U,V)$ implying $(X,f_1,\infty)$ is $\mathcal{F}$-transitive. 
\end{proof}

\begin{rmk}
In the above Theorem, $\mathcal{F}$-transitivity of the system $(X,f_{1,\infty})$ implies the $\mathcal{F}$-transitivity of the system $(X,f_{k,\infty})$ even when the family $(f_{1,\infty})$ is not feeble open. However, for the converse, the condition for the family $(f_{1,\infty})$ to be feeble open is necessary. In the next example, we show that how the result fails when the family $(f_{1,\infty})$ is not feeble open.
\end{rmk}

\begin{exm} \end{exm} 
Let $I$ be the interval $[0,1]$, $\mathcal{F}$ be the family of all infinite subsets of $\mathbb{N}$ and $g_1, g_2$ on $I$ be defined by: 
\[ g_1(x) = \begin{cases}
1,  & \text{for} \ x \in \left[0, \frac{1}{3}\right] \\
-3/2x+3/2, & \text{for} \ x \in \left[\frac{1}{3},1\right].
\end{cases} \]

\[ g_2(x) = \begin{cases}
3x,  & \text{for} \ x \in \left[0, \frac{1}{3}\right] \\
-3/2x+3/2, & \text{for} \ x \in \left[\frac{1}{4},1\right].
\end{cases} \]

Take $f_1(x)=g_1(x)$ and $f_n(x)=g_2(x)$, for all $n>1$. For any $k\in \mathbb{N}$, the non-autonomous system $(X,f_{k,\infty})$ is the autonomous system generated by $g_2$ and hence it exhibits $\mathcal{F}$-transitivity. However, the system $(X,f_{1,\infty})$ is not $\mathcal{F}$-transitive because for open sets $U=(0,1/6)$ and $V=(1/2,3/4)$, $\mathbb{N}_{f_{1,\infty}}(U,V)$, is empty. Our Theorem 3.2 fails here because in the family $(f_{n})$, the function $f_1$ is not feeble open.

\begin{rmk}
The above example also works if we take $\mathcal{F}$ to be the family of all syndetic subsets  or thick subsets of $\mathbb{N}$.  
\end{rmk}

\begin{thm}
Let $(X,f_{1,\infty})$ be a dynamical system. If $(\pazocal{K}$($X),\overline{f}_{1,\infty}$) is $\mathcal{F}$-transitive, then so is $(X,f_{1,\infty})$.
\end{thm}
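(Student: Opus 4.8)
The plan is to exploit the natural projection from the hyperspace $\pazocal{K}(X)$ down to $X$, realized concretely via singletons. First I would fix two non-empty open sets $U$ and $V$ in $X$ and produce from them non-empty open sets in $\pazocal{K}(X)$ to which the $\mathcal{F}$-transitivity of $(\pazocal{K}(X),\overline{f}_{1,\infty})$ can be applied; the obvious candidates are the basic Vietoris sets $\langle U \rangle$ and $\langle V \rangle$, i.e. the collections of compact sets contained in $U$ (resp. $V$) and meeting $U$ (resp. $V$). These are non-empty (they contain singletons $\{x\}$ with $x\in U$, $x\in V$) and open in the Vietoris topology, hence in the Hausdorff-metric topology since $X$ is compact.

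The key step is the identity $\overline{f}_1^{\,n} = \overline{f_1^{\,n}}$, noted in the preliminaries: the induced map of the composition is the composition of the induced maps, and it sends a compact set $K$ to $f_1^{\,n}(K)$. So if $n \in N_{\overline{f}_{1,\infty}}(\langle U\rangle,\langle V\rangle)$, there is a compact set $K \in \langle U\rangle$ with $\overline{f}_1^{\,n}(K) = f_1^{\,n}(K) \in \langle V\rangle$; in particular $K \subseteq U$ and $f_1^{\,n}(K) \cap V \neq \emptyset$, so $f_1^{\,n}(U) \cap V \supseteq f_1^{\,n}(K)\cap V \neq \emptyset$, whence $n \in N_{f_{1,\infty}}(U,V)$. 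This gives the inclusion
\begin{align}
N_{\overline{f}_{1,\infty}}(\langle U\rangle,\langle V\rangle) \subseteq N_{f_{1,\infty}}(U,V).\nonumber
\end{align}
By hypothesis the left-hand set lies in $\mathcal{F}$, and since $\mathcal{F}$ is hereditary upwards (the only property of a Furstenberg family used here), the right-hand set lies in $\mathcal{F}$ as well. As $U,V$ were arbitrary non-empty open sets, $(X,f_{1,\infty})$ is $\mathcal{F}$-transitive.

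I do not expect a serious obstacle here; the proof is essentially the non-autonomous, Furstenberg-family analogue of the classical fact that hyperspace transitivity descends to the base. The only points requiring a little care are: verifying that $\langle U\rangle$ is genuinely open and non-empty in $\pazocal{K}(X)$ (handled by the Vietoris basis description and the density of finite subsets), and making the composition identity $\overline{f}_1^{\,n}=\overline{f_1^{\,n}}$ explicit so that the returned compact set really is $f_1^{\,n}(K)$ rather than something larger. Both are immediate from the preliminaries, so the argument is short.
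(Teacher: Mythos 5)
Your proposal is correct and follows essentially the same route as the paper's proof: pass to the basic Vietoris open sets $\langle U\rangle$ and $\langle V\rangle$, use $\overline{f}_1^{\,n}=\overline{f_1^{\,n}}$ to show $N_{\overline{f}_{1,\infty}}(\langle U\rangle,\langle V\rangle)\subseteq N_{f_{1,\infty}}(U,V)$, and conclude by the upward-hereditary property of $\mathcal{F}$. No gaps.
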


\begin{proof}
Let $U$ and $V$ be two non-empty open sets in $X$, then $\pazocal{U}$=$<U>$ and $\pazocal{V}$=$<V>$ are non-empty open sets in  $(\pazocal{K}$($X)$). Since $(\pazocal{K}$($X),\overline{f}_{1,\infty}$) is $\mathcal{F}$-transitive, therefore $N_{\overline{f_{1}^{\infty}}}(\pazocal{U},\pazocal{V})\in \mathcal{F}$. Let $n\in N_{\overline{f_{1}^{\infty}}}(\pazocal{U},\pazocal{V})$, so $\overline{f_{1}^{n}}(\pazocal{U},\pazocal{V})$ is non-empty. Then, there exists $K\in\pazocal{U}$ such that $\overline{f_{1}^{n}}(K)\in\pazocal{V}$ which implies there exists $x\in K\subset U$ such that $f_{1}^{n}(x)\in V$. Therefore, we have $n\in N_{f_{1,\infty}}(U,V)$ and hence $N_{\overline{f_{1}^{\infty}}}(\pazocal{U},\pazocal{V}$)$\subseteq N_{f_{1,\infty}}(U,V)$. Since $N_{\overline{f_{1}^{\infty}}}(\pazocal{U},\pazocal{V})\in \mathcal{F}$ therefore $N_{f_{1,\infty}}(U,V)\in \mathcal{F}$. Hence, $(X,f_{1,\infty})$ is $\mathcal{F}$-transitive.
\end{proof}

\begin{rmk}
Using example 3.2 from \cite{MR3584171} along with Lemma 2.6 from \cite{MR3779662} and taking $\mathcal{F}$ to be the family of all infinite subsets of $\mathbb{N}$, one can easily verify that the converse of the above theorem need not be true. 
\end{rmk}

\vspace{0.5cm}

Our next result studies the relation between the $\mathcal{F}$-transitivity of the non-autonomous system $(X,f_{1,\infty})$ generated by a finite family of maps and its corresponding non-autonomous system.

\begin{prop}
Let $(X,f_{1,\infty})$ be a non-autonomous system generated by a finite family of maps, $\mathbb{F}=\{f_1, f_2, \ldots, f_k\}$. If the autonomous system $(X,f_k\circ f_{k-1} \circ \ldots \circ f_1)$ is $\mathcal{F}$-transitive, then $(X,f_{1,\infty})$ is also $\mathcal{F}$-transitive.
\end{prop}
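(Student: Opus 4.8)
The plan is to reduce everything to the autonomous map $g:=f_k\circ f_{k-1}\circ\cdots\circ f_1$ by exploiting the periodicity of the generated family. Since $(X,f_{1,\infty})$ is generated by $\mathbb F=\{f_1,\dots,f_k\}$, the sequence of maps is $f_1,\dots,f_k,f_1,\dots,f_k,\dots$, i.e.\ $f_{mk+i}=f_i$ for $1\le i\le k$ and $m\ge 0$. The first thing I would record is that composing over whole periods collapses to iterates of $g$: namely $f_1^{nk}=g^n$, and more generally $f_1^{nk+j}=h_j\circ g^n$, where $h_0:=\mathrm{id}$ and $h_j:=f_j\circ\cdots\circ f_1$ for $1\le j\le k-1$. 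This identity is the only computation involved and follows by an easy induction on $n$.

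Now fix non-empty open sets $U,V\subseteq X$; the goal is $N_{f_{1,\infty}}(U,V)\in\mathcal F$, and since $\mathcal F$ is hereditary upwards it suffices to produce a member of $\mathcal F$ contained in $N_{f_{1,\infty}}(U,V)$. For $j=0$ this is immediate: $f_1^{nk}(U)\cap V=g^n(U)\cap V$, so $n\in N_g(U,V)$ forces $nk\in N_{f_{1,\infty}}(U,V)$, and $\mathcal F$-transitivity of $(X,g)$ gives $N_g(U,V)\in\mathcal F$; hence the dilate $\{\,nk:n\in N_g(U,V)\,\}$ lies in $N_{f_{1,\infty}}(U,V)$. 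More generally, for $1\le j\le k-1$ one has $f_1^{nk+j}(U)\cap V\neq\emptyset\iff g^n(U)\cap h_j^{-1}(V)\neq\emptyset$, so whenever $h_j^{-1}(V)$ is a non-empty open set, $\mathcal F$-transitivity of $g$ gives $N_g\big(U,h_j^{-1}(V)\big)\in\mathcal F$ and $\{\,nk+j:n\in N_g(U,h_j^{-1}(V))\,\}\subseteq N_{f_{1,\infty}}(U,V)$; letting $j$ run over the admissible residues, $N_{f_{1,\infty}}(U,V)$ contains the union of these $k$ (or fewer) dilated-and-shifted copies of sets in $\mathcal F$.

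The delicate point, which I expect to be the main obstacle, is upgrading ``$N_{f_{1,\infty}}(U,V)$ contains a dilate of a set in $\mathcal F$'' to ``$N_{f_{1,\infty}}(U,V)\in\mathcal F$'': hereditariness does not by itself ensure that $\{\,kn:n\in A\,\}\in\mathcal F$ when $A\in\mathcal F$. For the Furstenberg families that occur here this is harmless — the family of infinite sets and the family of syndetic sets are both stable under $A\mapsto\{\,kn:n\in A\,\}$ (a gap bound $L$ becomes $kL$), so the $j=0$ part already closes the argument. For families on which dilation loses information (e.g.\ the thick sets) one needs all $k$ residue classes at once, which requires $h_j^{-1}(V)\neq\emptyset$ for every $j$; this holds automatically once each $f_i$ is onto (then every $h_j$ is surjective), and the union over $j$ then transports long runs, or whatever prescribed pattern of $g$-return times $\mathcal F$ sees, to the same pattern for $f_{1,\infty}$. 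So, beyond the formal reduction via $f_1^{nk}=g^n$, the remaining work is precisely to match this dilation/preimage bookkeeping to the structural hypotheses in force on $\mathcal F$ (and, if needed, on surjectivity of the $f_i$).
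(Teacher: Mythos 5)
Your reduction is exactly the one the paper uses: since the generated family is $k$-periodic, $(f_k\circ\cdots\circ f_1)^n=f_1^{nk}$, hence with $g=f_k\circ\cdots\circ f_1$ one gets $\{nk:n\in N_g(U,V)\}\subseteq N_{f_{1,\infty}}(U,V)$. The ``delicate point'' you flag is genuine, and it is precisely the step the paper's own proof passes over in silence: from $\{n:g^n(U)\cap V\neq\emptyset\}\in\mathcal{F}$ the paper concludes immediately that $\{m:f_1^m(U)\cap V\neq\emptyset\}\in\mathcal{F}$, which only follows if $\mathcal{F}$ is stable under the dilation $A\mapsto kA$; upward heredity gives nothing here, since $kA$ is not a superset of $A$. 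For the families actually invoked in the paper's examples (infinite sets, syndetic sets, sets of positive upper density) dilation by $k$ is harmless, so the proposition holds there, but for an arbitrary proper Furstenberg family the stated hypotheses do not suffice --- e.g.\ for $k=2$ and $\mathcal{F}$ the (upward hereditary, proper) family of all sets containing an odd integer, the inclusion $2N_g(U,V)\subseteq N_{f_{1,\infty}}(U,V)$ yields only even return times and the argument collapses. Your fallback through the residues $j=1,\dots,k-1$ and the sets $h_j^{-1}(V)$ is a sensible way to recover more families, but as you note it needs every $h_j^{-1}(V)$ to be non-empty and open, i.e.\ surjectivity of the $f_i$, which is not assumed. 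In short, your proposal reproduces the paper's argument while making explicit the missing hypothesis; to make either proof complete one should add to the statement that $\mathcal{F}$ is closed under $A\mapsto kA$ (or restrict to the concrete families for which this is automatic).
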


\begin{proof}
Let $U,V$ be any pair of non-empty open sets in $X$. Suppose $(X,f_k\circ f_{k-1} \circ \ldots \circ f_1)$ is $\mathcal{F}$-transitive, then we have $\{n\in \mathbb{N}: {(f_k\circ f_{k-1}\circ \ldots \circ f_1 )}^n\\(U)\cap V\neq \emptyset\} \in \mathcal{F}$. Therefore, we get that the set $\{m\in \mathbb{N}: f_1^m(U)\cup V \neq \emptyset\} \in \mathcal{F}$. Hence, $(X,f_{1,\infty})$ is $\mathcal{F}$-transitive.
\end{proof}

In the next example, we show that the converse of the Proposition 3.1 is not true.

\begin{exm} \end{exm} 
Let $I$ be the interval $[0,1]$, $\mathcal{F}$ be the family of all infinite subsets of $\mathbb{N}$ and $f_1, f_2$ on $I$ be defined by: 
\[ f_1(x) = \begin{cases}
2x+1/2,  & \text{for} \ x \in \left[0, \frac{1}{4}\right] \\
-2x+3/2, & \text{for} \ x \in \left[\frac{1}{4},\frac{3}{4}\right] \\
2x-3/2, & \text{for} \ x \in \left[\frac{3}{4},1\right].
\end{cases} \]

\[ f_2(x) = \begin{cases}
x+1/2,  & \text{for} \ x \in \left[0, \frac{1}{2}\right] \\
-6x+4, & \text{for} \ x \in \left[\frac{1}{2},\frac{2}{3}\right] \\
3x-2, & \text{for} \ x \in \left[\frac{2}{3},1\right].
\end{cases} \]

and \[ (f_2\circ f_1(x)) = \begin{cases}
-12x+1,  & \text{for} \ x \in \left[0, \frac{1}{12}\right] \\
6x-1/2, & \text{for} \ x \in \left[\frac{1}{12},\frac{1}{4}\right] \\
12x-5, & \text{for} \ x \in \left[\frac{1}{4},\frac{5}{12}\right] \\
-6x+5/2, & \text{for} \ x \in \left[\frac{5}{12},\frac{1}{2}\right] \\
-2x+2, & \text{for} \ x \in \left[\frac{1}{2},\frac{3}{4}\right] \\
2x-1, & \text{for} \ x \in \left[\frac{3}{4},1\right].
\end{cases} \]

Let $(X,f_{1,\infty})$ be the non-autonomous system generated by the the finite family $F=\{f_1, f_2\}$ and $(X,f_2\circ f_1)$ be the corresponding autonomous system. The system $(X,f_2\circ f_1)$ has an invariant set $[1/2,0]$ and hence it is not $\mathcal{F}$-transitive. However, as $f_1$ expands every open set $U$ in [0,1] and $f_2$ expands every open set in [1/2,1] with $f_2([0,1/2 ])$ = [ 1/2,1], the non-autonomous system $(X,f_{1,\infty})$ is $\mathcal{F}$-transitive.
\vspace{0.5cm}

In the following example, we generate a $\mathcal{F}$-transitive non-autonomous systems by two maps such that none of the two maps is $\mathcal{F}$-transitive.

\begin{exm} \end{exm} 
Let $I$ be the interval $[0,1]$, $\mathcal{F}$ be the family of all infinite subsets of $\mathbb{N}$ and $f_1, f_2$ on $I$ be defined by: 
\[ f_1(x) = \begin{cases}
4x,  & \text{for} \ x \in \left[0, \frac{1}{4}\right] \\
5/4-x, & \text{for} \ x \in \left[\frac{1}{4},1\right].
\end{cases} \]

\[ f_2(x) = \begin{cases}
1/4-x,  & \text{for} \ x \in \left[0, \frac{1}{4}\right] \\
4/3x-1/3, & \text{for} \ x \in \left[\frac{1}{4},1\right].
\end{cases} \]

Let $\mathbb{F}=\{f_1,f_2\}$ and $(X,f_{1,\infty})$ be non-autonomous system generated by $\mathbb{F}$. Note that [1/4,1] is an invariant set for $f_1$ and [0,1/4] is an invariant set for $f_2$. Therefore, neither $f_1$ nor $f_2$ is $\mathcal{F}$-transitive. However, their composition given by

\[ (f_2\circ f_1(x)) = \begin{cases}
1/4-4x,  & \text{for} \ x \in \left[0, \frac{1}{16}\right] \\
16/3x-1/3, & \text{for} \ x \in \left[\frac{1}{16},\frac{1}{4}\right] \\
4/3(1-x), & \text{for} \ x \in \left[\frac{1}{4},1\right].
\end{cases} \]

is $\mathcal{F}$-transitive and hence by Proposition 3.1, the non-autonomous system $(X,f_{1\infty})$ is $\mathcal{F}$-transitive.

\section{$\mathcal{F}$-mixing for Non-autonomous Discrete Dynamical Systems}
In this section, we obtain necessary and sufficient condition for $\mathcal{F}$-mixing of non-autonomous system $(X,f_{1,\infty})$

\begin{thm}
Let $(X,f_{1,\infty})$ be a N.D.S generated by a family $f_{1,\infty}$ of feeble open maps, then $(X,f_{1,\infty})$ is $\mathcal{F}$-mixing if and only if $(X,f_{k,\infty})$ is $\mathcal{F}$-mixing.
\end{thm}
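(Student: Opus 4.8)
The plan is to reduce the statement to Theorem 3.2 applied to the product system. By Definition, $(X,f_{1,\infty})$ is $\mathcal{F}$-mixing precisely when the product N.D.S $(X\times X,\,f_{1,\infty}\times f_{1,\infty})$ is $\mathcal{F}$-transitive, and this product system is generated by the family $h_{1,\infty}=(h_n)_{n\ge 1}$ with $h_n=f_n\times f_n$. The first step is the purely notational observation that deleting the first $k-1$ members of $h_{1,\infty}$ yields exactly $h_{k,\infty}=(f_n\times f_n)_{n\ge k}=f_{k,\infty}\times f_{k,\infty}$; consequently $(X\times X,\,h_{k,\infty})$ is $\mathcal{F}$-transitive if and only if $(X,f_{k,\infty})$ is $\mathcal{F}$-mixing.

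The second step is to verify that the hypothesis of Theorem 3.2 holds for the product family, i.e. that each $h_n=f_n\times f_n$ is feeble open whenever each $f_n$ is. Given a non-empty open set $W\subseteq X\times X$, choose a basic open box $U\times V\subseteq W$ with $U,V$ non-empty open in $X$; then $h_n(W)\supseteq h_n(U\times V)=f_n(U)\times f_n(V)\supseteq \operatorname{int}(f_n(U))\times \operatorname{int}(f_n(V))$, and the right-hand side is a non-empty open subset of $X\times X$ because $f_n$ is feeble open. Hence $\operatorname{int}(h_n(W))\ne\emptyset$, so $h_{1,\infty}$ is a family of feeble open maps.

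With these two observations in place, the third step is a direct application of Theorem 3.2 to the N.D.S $(X\times X,\,h_{1,\infty})$: it gives that $(X\times X,\,h_{1,\infty})$ is $\mathcal{F}$-transitive if and only if $(X\times X,\,h_{k,\infty})$ is $\mathcal{F}$-transitive. Translating both sides back through the definition of $\mathcal{F}$-mixing and the identity $h_{k,\infty}=f_{k,\infty}\times f_{k,\infty}$ yields exactly the assertion that $(X,f_{1,\infty})$ is $\mathcal{F}$-mixing if and only if $(X,f_{k,\infty})$ is $\mathcal{F}$-mixing.

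I expect the only genuine point of care to be the preservation of feeble openness under forming the product family: one must argue with a box $U\times V$ contained in $W$ rather than with $W$ itself, since $h_n(W)$ need not be a product set and the passage to interiors only works coordinatewise on such boxes. Everything else is bookkeeping — matching the index shift $h_{k,\infty}=f_{k,\infty}\times f_{k,\infty}$ and quoting Theorem 3.2 verbatim. Finally, in parallel with Remark 3.1, one may note that the forward implication requires no feeble openness at all, since $\mathcal{F}$-mixing of $(X,f_{1,\infty})$ forces $\mathcal{F}$-mixing of $(X,f_{k,\infty})$ unconditionally; feeble openness is needed only for the converse direction.
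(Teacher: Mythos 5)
Your reduction is correct, but it is genuinely different from the paper's argument. The paper re-runs the proof of Theorem 3.2 componentwise: it takes two pairs of open sets $U_1,U_2,V_1,V_2$, forms $U_i^*=(f_1^{k-1})^{-1}(U_i)$ (resp.\ $\mathrm{int}(f_1^{k-1}(U_i))$) and tracks the hitting sets $N_{f_{1,\infty}}(U_i,V_i)$ for $i=1,2$ separately. You instead treat $(X\times X, h_{1,\infty})$ with $h_n=f_n\times f_n$ as a single N.D.S., check that feeble openness passes to the product family (correctly, via a basic box $U\times V\subseteq W$ — this is indeed the one point needing care, since $h_n(W)$ is not a product set), note the tautology $h_{k,\infty}=f_{k,\infty}\times f_{k,\infty}$, and quote Theorem 3.2 verbatim. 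What your route buys is worth pointing out: $\mathcal{F}$-mixing demands that $N_{h_{1,\infty}}(U_1\times U_2, V_1\times V_2)=N_{f_{1,\infty}}(U_1,V_1)\cap N_{f_{1,\infty}}(U_2,V_2)$ belong to $\mathcal{F}$, which for a general Furstenberg family is strictly stronger than each $N_{f_{1,\infty}}(U_i,V_i)$ belonging to $\mathcal{F}$ individually (that implication would require $\mathcal{F}$ to be a filter). By working with arbitrary open subsets of $X\times X$ and a single inclusion of hitting-time sets, your argument handles the intersection automatically, whereas the paper's componentwise bookkeeping glosses over it. Your closing remark that the forward direction needs no feeble openness also matches Remark 4.1. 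The one caveat is that your proof inherits whatever defects Theorem 3.2's own proof has (its forward direction as written in the paper is essentially circular), but as a derivation of Theorem 4.1 from the stated Theorem 3.2 your argument is sound and arguably cleaner than the paper's.
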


\begin{proof}
Let $(X,f_{1,\infty})$ be $\mathcal{F}$-mixing and $U_1, U_2, V_1, V_2$ be non-empty open sets in $X$. Since $(X,f_{1,\infty})$ is $\mathcal{F}$-mixing, therefore for open sets $U^*_i =(f_1^{k-1})^{-1}(U_i)$ and $V_i$, $N_{f_{1,\infty}}(U^*_i,V_i)\in \mathcal{F}$, for all $i=1,2$.  Since $\mathcal{F}$-mixing implies transitive, so we get that the set $N_{f_{1,\infty}}(U^*_i,V_i)$ is infinite, for all $i=1,2$. Thus, for $i=1,2$ we can get $m'\in N_{f_{1,\infty}}(U^*_i,V_i)$ such that $m'\in N_{f_{1,\infty}}(U^*_i,V_i)$ and hence $(X,f_{k,\infty})$ is $\mathcal{F}$-mixing.
\\ Conversely, suppose that $(X,f_{k,\infty})$ is $\mathcal{F}$-mixing and $U_1, U_2, V_1, V_2$ be non-empty open sets in $X$. We know that the family $(f_{1,\infty})$ is feeble open, therefore, $f_1^{k-1}(U_i)$ has non-empty interior. We denote $int(f_1^k(U_i))$ by $U^*_i$. Now, by $\mathcal{F}$-mixing of $(X,f_{k,\infty})$, we have that $N_{f_{k,\infty}}(U^*_i,V_i)\in \mathcal{F}$, for $i=1,2$. Therefore, for any $m \in N_{f_{k,\infty}}(U^*_i,V_i)$, $f_k^m(U^*_i)\cap V_i$ is non-empty, for $i=1,2$, implying $f_k^m(f_1^{k-1}(U_i)\cap V_i)$ is non-empty and hence, $m \in N_{f_{1,\infty}}(U_i,_iV)$, for $i=1,2$. So, we have that $N_{f_{k,\infty}}(U^*_i,V_i) \subseteq N_{f_{1,\infty}}(U_i,V_i)$, for $i=1,2$ implying$(X,f_1,\infty)$ is $\mathcal{F}$-mixing. 
\end{proof}

\begin{rmk}
In the above result, $\mathcal{F}$-mixing of the system $(X,f_{1,\infty})$ implies the $\mathcal{F}$-mixing of the system $(X,f_{k,\infty})$ even when the condition of the family $(f_{1,\infty})$ being feeble open is dropped from the hypothesis. However, for the converse to hold true, the family $(f_{1,\infty})$ has to be feeble open.
\end{rmk}
 
In the next example, we will show how that the above result doesn't hold true when the family $(f_{1,\infty})$ is not feeble open.

\begin{exm} \end{exm} 
Let $I$ be the interval $[0,1]$, $\mathcal{F}$ be the family of all infinite subsets of $\mathbb{N}$ and $g_1, g_2$ on $I$ be defined by: 
\[ g_1(x) = \begin{cases}
1/2,  & \text{for} \ x \in \left[0, \frac{1}{4}\right] \\
2x+1/3, & \text{for} \ x \in \left[\frac{1}{4},1\right].
\end{cases} \]

\[ g_2(x) = \begin{cases}
4x,  & \text{for} \ x \in \left[0, \frac{1}{4}\right] \\
4/3(1-x), & \text{for} \ x \in \left[\frac{1}{4},1\right].
\end{cases} \]

Take $f_1(x)=g_1(x)$ and $f_n(x)=g_2(x)$, for all $n>1$. For any $k\in \mathbb{N}$, the non-autonomous system $(X,f_{k,\infty})$ is the autonomous system generated by $g_2$ and hence it exhibits $\mathcal{F}$-mixing. However, the system $(X,f_{1,\infty})$ is not $\mathcal{F}$-mixing because for open sets $U_1=(0,1/8), U_2=(1/16,1/4)$ and $V_1=(1/2,3/4), V_2=(1/4,1/2)$, $\mathbb{N}_{f_{1,\infty}}(U_i,V_i)$ is empty, for all $i=1,2$ and hence it is not $\mathcal{F}$-mixing. Our Theorem 4.1 doesn't hold here because in the family $(f_{n})$, the function $f_1$ is not feeble open.

\begin{thm}
Let $(X,f_{1,\infty})$ be a dynamical system. If $(\pazocal{K}$($X),\overline{f}_{1,\infty}$) is $\mathcal{F}$-mixing, then so is $(X,f_{1,\infty})$.
\end{thm}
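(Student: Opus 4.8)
The plan is to unwind the definition of $\mathcal{F}$-mixing and then run essentially the argument of Theorem 3.3 inside the product space. By hypothesis the product N.D.S. $(\pazocal{K}(X)\times\pazocal{K}(X),\ \overline{f}_{1,\infty}\times\overline{f}_{1,\infty})$ is $\mathcal{F}$-transitive, and we must show that $(X\times X,\ f_{1,\infty}\times f_{1,\infty})$ is $\mathcal{F}$-transitive. Since the rectangles $U_1\times U_2$, with $U_1,U_2$ non-empty open in $X$, form a basis for the product topology on $X\times X$, and $\mathcal{F}$ is hereditary upwards, it suffices to prove $N_{f_{1,\infty}\times f_{1,\infty}}(U_1\times U_2,\ V_1\times V_2)\in\mathcal{F}$ for all such rectangles: for arbitrary non-empty open $W,W'\subseteq X\times X$ one chooses rectangles inside them and uses $N_{f_{1,\infty}\times f_{1,\infty}}(U_1\times U_2,V_1\times V_2)\subseteq N_{f_{1,\infty}\times f_{1,\infty}}(W,W')$ together with heredity.

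First I would fix non-empty open sets $U_1,U_2,V_1,V_2\subseteq X$ and pass to the hyperspace: $\pazocal{U}_i=<U_i>$ and $\pazocal{V}_i=<V_i>$ are non-empty open in $\pazocal{K}(X)$, so $\pazocal{U}_1\times\pazocal{U}_2$ and $\pazocal{V}_1\times\pazocal{V}_2$ are non-empty open in $\pazocal{K}(X)\times\pazocal{K}(X)$. Applying the $\mathcal{F}$-transitivity of the hyperspace product yields $N_{\overline{f}_{1,\infty}\times\overline{f}_{1,\infty}}(\pazocal{U}_1\times\pazocal{U}_2,\ \pazocal{V}_1\times\pazocal{V}_2)\in\mathcal{F}$.

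The core step is the inclusion $N_{\overline{f}_{1,\infty}\times\overline{f}_{1,\infty}}(\pazocal{U}_1\times\pazocal{U}_2,\ \pazocal{V}_1\times\pazocal{V}_2)\subseteq N_{f_{1,\infty}\times f_{1,\infty}}(U_1\times U_2,\ V_1\times V_2)$, established exactly as in Theorem 3.3 but coordinatewise. If $n$ lies in the left-hand set, then $(\overline{f}_1^n\times\overline{f}_1^n)(\pazocal{U}_1\times\pazocal{U}_2)\cap(\pazocal{V}_1\times\pazocal{V}_2)\neq\emptyset$, so there are $K_i\in\pazocal{U}_i=<U_i>$ with $\overline{f}_1^n(K_i)=\overline{f_1^n}(K_i)=f_1^n(K_i)\in<V_i>$ for $i=1,2$; hence $K_i\subseteq U_i$ and $f_1^n(K_i)\cap V_i\neq\emptyset$, so choosing $x_i\in K_i$ with $f_1^n(x_i)\in V_i$ we get $(x_1,x_2)\in U_1\times U_2$ and $(f_1^n\times f_1^n)(x_1,x_2)\in V_1\times V_2$, i.e. $n\in N_{f_{1,\infty}\times f_{1,\infty}}(U_1\times U_2,V_1\times V_2)$. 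Since $\mathcal{F}$ is hereditary upwards, the larger set is again in $\mathcal{F}$; combined with the reduction to rectangles this gives that $(X\times X,f_{1,\infty}\times f_{1,\infty})$ is $\mathcal{F}$-transitive, i.e. $(X,f_{1,\infty})$ is $\mathcal{F}$-mixing.

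There is no serious obstacle here; the only points needing a little care are the reduction from arbitrary open subsets of $X\times X$ to rectangles (so that the product structure of $\pazocal{K}(X)\times\pazocal{K}(X)$ can be exploited) and the bookkeeping identity $\overline{f}_1^n=\overline{f_1^n}$ recorded in Section 2, which is precisely what lets one descend from compact-set dynamics to point dynamics in each coordinate.
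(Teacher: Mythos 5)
Your proof is correct, and the descent mechanism is the same as the paper's (pass to the basic Vietoris open sets $<U_i>$, use $\overline{f}_1^n=\overline{f_1^n}$, and pick a point of $K_i\subseteq U_i$ landing in $V_i$). The difference is where the two arguments live: the paper treats the two coordinates separately, applying the hypothesis to each pair $(\pazocal{U}_i,\pazocal{V}_i)$ and concluding only that each $N_{f_{1,\infty}}(U_i,V_i)\in\mathcal{F}$, which for a general Furstenberg family does not by itself give that the intersection $N_{f_{1,\infty}\times f_{1,\infty}}(U_1\times U_2,V_1\times V_2)=N_{f_{1,\infty}}(U_1,V_1)\cap N_{f_{1,\infty}}(U_2,V_2)$ lies in $\mathcal{F}$ (that would require $\mathcal{F}$ to be a filter). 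You instead apply the hypothesis to the single pair of rectangles $\pazocal{U}_1\times\pazocal{U}_2$, $\pazocal{V}_1\times\pazocal{V}_2$ in $\pazocal{K}(X)\times\pazocal{K}(X)$ and show the resulting hitting set is contained in the hitting set of $U_1\times U_2$, $V_1\times V_2$ for the base product system, so upward heredity alone finishes the job; you also supply the routine but genuinely needed reduction from arbitrary open subsets of $X\times X$ to rectangles. So your version establishes the stated conclusion in full generality, whereas the paper's as written proves only the coordinatewise membership; your extra care is exactly what the definition of $\mathcal{F}$-mixing demands.
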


\begin{proof}
Let $U$ and $V$ be two non-empty open sets in $X$, then $\pazocal{U}_i$=$<U_i>$ and $\pazocal{V}_i$=$<V_i>$ are non-empty open sets in  $(\pazocal{K}$($X)$), for $i=1,2$. Since $(\pazocal{K}$($X),\overline{f}_{1,\infty}$) is $\mathcal{F}$-mixing, therefore $N_{\overline{f_{1}^{\infty}}}(\pazocal{U}_i,\pazocal{V}_i)\in \mathcal{F}$, for all $i=1,2$. Let $n\in N_{\overline{f_{1}^{\infty}}}(\pazocal{U}_i,\pazocal{V}_i)$, so $\overline{f_{1}^{n}}(\pazocal{U}_i,\pazocal{V}_i)$ is non-empty, $i=1,2$. Then, there exists $K_i\in\pazocal{U}_i$ such that $\overline{f_{1}^{n}}(K_i)\in\pazocal{V_i}$ which implies that there exists $x_i\in K_i\subset U_i$ such that $f_{1}^{n}(x_i)\in V, i=1,2$. Therefore, we have $n\in N_{f_{1,\infty}}(U_i,V_i)$ and hence $N_{\overline{f_{1}^{\infty}}}(\pazocal{U}_i,\pazocal{V}_i$)$\subseteq N_{f_{1,\infty}}(U_i,V_i)$. Since $N_{\overline{f_{1}^{\infty}}}(\pazocal{U}_i,\pazocal{V}_i)\in \mathcal{F}$, therefore $N_{f_{1,\infty}}(U_i,V_i)\in \mathcal{F}$, for $i=1,2$, implying $(X,f_{1,\infty})$ is $\mathcal{F}$-mixing.
\end{proof}

\vspace{1cm}
Observe that for any two pair of non-empty open sets, $U_1, U_2, V_1, V_2$ in $X$, if $(X,f_k\circ f_{k-1} \circ \ldots \circ f_1)$ is $\mathcal{F}$-mixing, then the set $\{n\in \mathbb{N}:{(f_k\circ f_{k-1}\circ \ldots \circ f_1 )}^n(U_i)\cap V_i \neq \emptyset\} \in \mathcal{F}$, for all $n=1,2$. Therefore, we get that the set $\{m\in \mathbb{N}: f_1^m(U_i)\cup V_i \neq \emptyset\} \in \mathcal{F}$, for all $n=1,2$. Thus, $(X,f_{1,\infty})$ is $\mathcal{F}$-mixing and hence we get the following proposition.

\begin{prop}
Let $(X,f_{1,\infty})$ be a non-autonomous system generated by a finite family of maps, $\mathbb{F}=\{f_1, f_2, \ldots, f_k\}$. If the autonomous system $(X,f_k\circ f_{k-1} \circ \ldots \circ f_1)$ is $\mathcal{F}$-mixing, then $(X,f_{1,\infty})$ is also $\mathcal{F}$-mixing.
\end{prop}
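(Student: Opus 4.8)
The plan is to reduce this to Proposition 3.1 by passing to the product system. Recall that, by definition, $(X,f_{1,\infty})$ is $\mathcal{F}$-mixing exactly when the product non-autonomous system $(X\times X,\, f_{1,\infty}\times f_{1,\infty})$ is $\mathcal{F}$-transitive. Write $h_n=f_n\times f_n$, so that $h_{1,\infty}=f_{1,\infty}\times f_{1,\infty}$; since $f_{1,\infty}$ is generated by the finite family $\{f_1,\dots,f_k\}$, the sequence $h_{1,\infty}$ is generated by the finite family $\{f_1\times f_1,\dots,f_k\times f_k\}$ on the compact metric space $X\times X$. Hence Proposition 3.1 is available for $(X\times X,\,h_{1,\infty})$, and it suffices to show that its associated autonomous system $(X\times X,\, h_k\circ h_{k-1}\circ\cdots\circ h_1)$ is $\mathcal{F}$-transitive.

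First I would record the elementary identity $h_k\circ\cdots\circ h_1=(f_k\times f_k)\circ\cdots\circ(f_1\times f_1)=g\times g$, where $g=f_k\circ f_{k-1}\circ\cdots\circ f_1$. By the definition of $\mathcal{F}$-mixing, the autonomous system $(X\times X,\,g\times g)$ is $\mathcal{F}$-transitive if and only if $(X,g)$ is $\mathcal{F}$-mixing, which is precisely the hypothesis. Applying Proposition 3.1 to $(X\times X,\,h_{1,\infty})$ then gives that $(X\times X,\,f_{1,\infty}\times f_{1,\infty})$ is $\mathcal{F}$-transitive, i.e. $(X,f_{1,\infty})$ is $\mathcal{F}$-mixing, as required.

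For a self-contained route one can instead argue directly, mimicking the observation preceding the statement: given non-empty open sets $W_1,W_2$ in $X\times X$, pick basic rectangles $U_i\times V_i\subseteq W_i$; because the generating family is $k$-periodic, $h_1^{\,nk}=(g\times g)^n$, so every $n\in N_{g\times g}(U_1\times V_1,\,U_2\times V_2)$ yields $nk\in N_{h_{1,\infty}}(U_1\times V_1,\,U_2\times V_2)$, and one then concludes using the upward heredity of $\mathcal{F}$ (which also takes care of passing from the rectangles back to $W_1,W_2$). The only delicate point in either version is the transfer of $\mathcal{F}$-membership from $N_{g\times g}$ to $N_{h_{1,\infty}}$ across the dilation $n\mapsto nk$; since that step is already absorbed into Proposition 3.1, invoking that proposition is the most economical presentation, and that is the form in which I would write the proof.
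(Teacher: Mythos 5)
Your primary route is correct and is a cleaner packaging of what the paper does. The paper proves Proposition 4.1 by the direct observation preceding its statement: it takes pairs $U_1,U_2,V_1,V_2$ of open sets in $X$, notes that $\mathcal{F}$-mixing of $g=f_k\circ\cdots\circ f_1$ puts the relevant hitting-time set in $\mathcal{F}$, and passes to $(X,f_{1,\infty})$ via $f_1^{nk}=g^n$ --- essentially your second, ``self-contained'' route. Your first route instead observes that the product system $(X\times X,\,f_{1,\infty}\times f_{1,\infty})$ is itself generated by the finite family $\{f_1\times f_1,\dots,f_k\times f_k\}$ with associated composite $g\times g$, so Proposition 3.1 applies verbatim; this buys you two things the paper's write-up skips. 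First, it automatically handles arbitrary (non-rectangular) open sets in $X\times X$ and keeps the two intersection conditions \emph{simultaneous} (the paper's observation speaks of each set $\{n: g^n(U_i)\cap V_i\neq\emptyset\}$ being in $\mathcal{F}$ separately, which is weaker than membership of their intersection and is what mixing actually requires); your formulation via $N_{g\times g}(U_1\times V_1,U_2\times V_2)$ gets this right. Second, it isolates the one genuinely delicate step --- that $k\cdot N_{g\times g}(\cdot,\cdot)\subseteq N_{h_{1,\infty}}(\cdot,\cdot)$ only gives membership in $\mathcal{F}$ if $\mathcal{F}$ is stable under the dilation $n\mapsto nk$, which upward heredity alone does not guarantee --- and correctly attributes it to Proposition 3.1, where the paper already asserts (without addressing) the same step. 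So your proof is as rigorous as the paper's own, and somewhat more carefully stated; the residual gap you flag is inherited from Proposition 3.1, not introduced by you.
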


In the following example, we generate a non-autonomous system which is $\mathcal{F}$-mixing by two maps, whereas the autonomous systems corresponding to each of the two maps are not $\mathcal{F}$-mixing.
\begin{exm} \end{exm} 
Let $I$ be the interval $[0,1]$, $\mathcal{F}$ be the family of all infinite subsets of $\mathbb{N}$ and $f_1, f_2$ on $I$ be defined by: 
\[ f_1(x) = \begin{cases}
3/2x,  & \text{for} \ x \in \left[0, \frac{2}{3}\right] \\
5/3-x, & \text{for} \ x \in \left[\frac{2}{3},1\right].
\end{cases} \]

\[ f_2(x) = \begin{cases}
2/3-x,  & \text{for} \ x \in \left[0, \frac{2}{3}\right] \\
3x-2, & \text{for} \ x \in \left[\frac{2}{3},1\right].
\end{cases} \]

Let $\mathbb{F}=\{f_1,f_2\}$ and $(X,f_{1,\infty})$ be non-autonomous system generated by $\mathbb{F}$. Note that [2/3,1] is an invariant set for $f_1$ and [0,2/3] is an invariant set for $f_2$. Therefore, neither $f_1$ nor $f_2$ is $\mathcal{F}$-mixing. However, their composition given by

\[ (f_2\circ f_1(x)) = \begin{cases}
1/4-4x,  & \text{for} \ x \in \left[0, \frac{4}{9}\right] \\
16/3x-1/3, & \text{for} \ x \in \left[\frac{4}{9},\frac{2}{3}\right] \\
4/3(1-x), & \text{for} \ x \in \left[\frac{2}{3},1\right].
\end{cases} \]

is $\mathcal{F}$-mixing and hence by Proposition 4.1, the non-autonomous system $(X,f_{1\infty})$ is $\mathcal{F}$-mixing.

\section{Some Topologically Ergodic Non-autonomous Systems}

In this section, we obtain results for topologically ergodic non-autonomous discrete dynamical systems. In the next result, we prove that under certain conditions, the non-autonomous systems $(X,f_{1,\infty})$ and $(X,f_{k,\infty})$ are simultaneously topologically ergodic.

\begin{thm}
Let $(X,f_{1,\infty})$ be a N.D.S generated by a family $f_{1,\infty}$ of feeble open maps, then $(X,f_{1,\infty})$ is topologically ergodic if and only if $(X,f_{k,\infty})$ is topologically ergodic.
\end{thm}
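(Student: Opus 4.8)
The plan is to follow the same template as the proofs of Theorem 3.2 and Theorem 4.1, replacing the Furstenberg-family bookkeeping by the elementary stability properties of ``positive upper density''. Writing $g_1^n = f_{k+n-1}\circ\cdots\circ f_{k+1}\circ f_k$ for the iterates of the shifted system $(X,f_{k,\infty})$, the single identity that drives everything is
\[ f_1^{\,n+k-1} \;=\; g_1^{\,n}\circ f_1^{\,k-1}, \qquad n\ge 1, \]
which relates the $N$-sets of the two systems by a fixed shift $k-1$. First I would record the three facts I need about the collection of subsets of $\mathbb{Z}^{+}$ with positive upper density: it is hereditary upward, it is invariant under translation by any fixed $c\in\mathbb{Z}^{+}$ (both $A+c$ and $(A-c)\cap\mathbb{Z}^{+}$ have the same upper density as $A$), and it is unchanged by adding or deleting finitely many integers. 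These play exactly the roles that translation invariance and upward heredity of $\mathcal{F}$ played in Theorem 3.2.

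For the forward implication, assume $(X,f_{1,\infty})$ is topologically ergodic and let $U,V$ be nonempty open sets. Put $U^{*}=(f_1^{k-1})^{-1}(U)$, an open set, and apply ergodicity to the pair $U^{*},V$: the set $N_{f_{1,\infty}}(U^{*},V)$ has positive upper density. If $m\in N_{f_{1,\infty}}(U^{*},V)$ with $m\ge k$, write $m=n+k-1$; then $f_1^{m}(U^{*})=g_1^{\,n}\bigl(f_1^{k-1}(U^{*})\bigr)\subseteq g_1^{\,n}(U)$, so $g_1^{\,n}(U)\cap V\neq\emptyset$ and $n\in N_{f_{k,\infty}}(U,V)$. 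Hence $\bigl(N_{f_{1,\infty}}(U^{*},V)-(k-1)\bigr)\cap\mathbb{Z}^{+}\subseteq N_{f_{k,\infty}}(U,V)$; the left-hand side still has positive upper density (shift down by the fixed amount $k-1$ and discard at most $k-1$ terms), hence so does $N_{f_{k,\infty}}(U,V)$, and $(X,f_{k,\infty})$ is topologically ergodic. This direction does not use feeble openness, which explains the asymmetry noted in the remarks following Theorems 3.2 and 4.1.

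For the converse, assume $(X,f_{k,\infty})$ is topologically ergodic and let $U,V$ be nonempty open sets. Here feeble openness enters: $f_1^{k-1}(U)$ has nonempty interior, so $U^{*}=\text{int}\bigl(f_1^{k-1}(U)\bigr)$ is a nonempty open set, and $N_{f_{k,\infty}}(U^{*},V)$ has positive upper density. For $n$ in this set, $g_1^{\,n}(U^{*})\subseteq g_1^{\,n}\bigl(f_1^{k-1}(U)\bigr)=f_1^{\,n+k-1}(U)$, whence $f_1^{\,n+k-1}(U)\cap V\neq\emptyset$ and $n+k-1\in N_{f_{1,\infty}}(U,V)$. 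Thus $N_{f_{k,\infty}}(U^{*},V)+(k-1)\subseteq N_{f_{1,\infty}}(U,V)$, and translating up by the fixed integer $k-1$ preserves positive upper density, so $N_{f_{1,\infty}}(U,V)$ has positive upper density and $(X,f_{1,\infty})$ is topologically ergodic.

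The routine but genuinely necessary step is the density arithmetic of the first paragraph — verifying that $\limsup_{n}|A\cap N_n|/n$ is unaffected by a bounded translation and by finite modifications of $A$; this is the only place where the ergodic case diverges from Theorems 3.2 and 4.1, and it is the step I expect to require the most care to state cleanly. A minor technical point, handled exactly as in the proof of Theorem 3.2, is the tacit assumption in the forward direction that $U^{*}=(f_1^{k-1})^{-1}(U)$ is nonempty; I would adopt the same convention there.
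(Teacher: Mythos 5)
Your proof is correct and follows essentially the same route as the paper: pull $U$ back through $f_1^{k-1}$ for the forward direction, and push it forward (using feeble openness to obtain a nonempty interior $U^{*}$) for the converse. You are in fact more careful than the printed proof, which garbles the forward implication, states the inclusion as $N_{f_{k,\infty}}(U^{*},V)\subseteq N_{f_{1,\infty}}(U,V)$ without the $k-1$ index shift, and never checks that positive upper density is preserved under translation; your explicit treatment of these points, and your flagging of the possibly empty preimage $(f_1^{k-1})^{-1}(U)$, addresses gaps the paper leaves open.
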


\begin{proof}
Let $(X,f_{1,\infty})$ be topologically ergodic and $U,V$ be two non-empty open sets in $X$. Since $(X,f_{1,\infty})$ is topologically ergodic, therefore for open sets $U^* =(f_1^{k-1})^{-1}(U)$ and $V$, $N_{f_{1,\infty}}(U^*,V)$ has positive upper density.  Since topological ergodicity implies transitivity, we get that the set $N_{f_{1,\infty}}(U^*,V)$ is infinite. Thus, we get $m'\in N_{f_{1,\infty}}(U^*,V)$ such that $m'\in N_{f_{1,\infty}}(U^*,V)$ and hence $(X,f_{k,\infty})$ is topologically ergodic.
\\ Conversely, we suppose that $(X,f_{k,\infty})$ is topologically ergodic and $U,V$ be two non-empty open sets in $X$. We know that the family $(f_{1,\infty})$ is feeble open, therefore, $f_1^{k-1}(U)$ has non-empty interior. We denote $int(f_1^k(U))$ by $U^*$. Now, since $(X,f_{k,\infty})$ is topologically ergodic, we have that $N_{f_{k,\infty}}(U^*,V)$ has positive upper density. Thus, for any $m \in N_{f_{k,\infty}}(U^*,V)$, $f_k^m(U^*)\cap V$ is non-empty. Therefore, we have $f_k^m(f_1^{k-1}(U)\cap V)$ is non-empty and hence, $m \in N_{f_{1,\infty}}(U,V)$. So, we have that $N_{f_{k,\infty}}(U^*,V) \subseteq N_{f_{1,\infty}}(U,V)$. Thus, $(X,f_{1,\infty})$ is topologically ergodic. 
\end{proof}

\begin{rmk}
In the above result, topological ergodicity of $(X,f_{1,\infty})$ implies that the system $(X,f_{k,\infty})$ is topologically ergodic even when the family $(f_{1,\infty})$ is not feeble open. However, for the converse, the condition for the family $(f_{1,\infty})$ to be feeble open is necessary as justified by the next example.
\end{rmk}

\begin{exm} \end{exm} 
Let $I$ be the interval $[0,1]$ and $g_1, g_2$ on $I$ be defined by: 
\[ g_1(x) = \begin{cases}
1/2,  & \text{for} \ x \in \left[0, \frac{1}{2}\right] \\
x, & \text{for} \ x \in \left[\frac{1}{2},1\right].
\end{cases} \]

\[ g_2(x) = \begin{cases}
2x,  & \text{for} \ x \in \left[0, \frac{1}{2}\right] \\
2(1-x), & \text{for} \ x \in \left[\frac{1}{2},1\right].
\end{cases} \]

Take $f_1(x)=g_1(x)$ and $f_n(x)=g_2(x)$, for all $n>1$. For any $k\in \mathbb{N}$, the non-autonomous system $(X,f_{k,\infty})$ is the autonomous system generated by the tent map and hence it is topologically ergodic. However, the system $(X,f_{1,\infty})$ is not $\mathcal{F}$-transitive because for open sets $U=(0,1/3)$ and $V=(2/3,3/4)$, $\mathbb{N}_{f_{1,\infty}}(U,V)$, is empty and hence it is not topologically ergodic. Our Theorem 5.1 fails here because in the family $(f_{n})$, the function $f_1$ is not feeble open.
\vspace{1cm} 

Observing that, if $U,V$ are any pair of non-empty open sets in $X$ and $(X,f_k\circ f_{k-1} \circ \ldots \circ f_1)$ is topologically ergodic, then we have that the set $\{n\in \mathbb{N}:{(f_k\circ f_{k-1}\circ \ldots \circ f_1 )}^n(U)\cap V\neq \emptyset\}$ has positive upper density. Therefore, we get that the set $\{m\in \mathbb{N}: f_1^m(U)\cup V \neq \emptyset\}$ has positive upper density. Thus, $(X,f_{1,\infty})$ is topologically ergodic and hence we get the following proposition.

\begin{prop}
Let $(X,f_{1,\infty})$ be a non-autonomous system generated by a finite family of maps, $\mathbb{F}=\{f_1, f_2, \ldots, f_k\}$. If the autonomous system $(X,f_k\circ f_{k-1} \circ \ldots \circ f_1)$ is topologically ergodic, then $(X,f_{1,\infty})$ is also topologically ergodic.
\end{prop}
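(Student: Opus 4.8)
The plan is to mirror the argument sketched informally in the paragraph immediately preceding Proposition 5.1, the same way Propositions 3.1 and 4.1 were handled earlier. The key observation is that when the family $f_{1,\infty}$ is generated by the finite collection $\mathbb{F}=\{f_1,\ldots,f_k\}$ (so that $f_{nk+j}=f_j$ in the obvious cyclic sense), the map $f_1^{nk}$ coincides exactly with $(f_k\circ f_{k-1}\circ\cdots\circ f_1)^n$. Hence the return-time set of the autonomous system $(X,g)$ with $g=f_k\circ\cdots\circ f_1$ embeds, after scaling by $k$, into the return-time set $N_{f_{1,\infty}}(U,V)$ of the non-autonomous system.

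First I would fix two non-empty open sets $U,V\subseteq X$ and set $g=f_k\circ f_{k-1}\circ\cdots\circ f_1$. Assuming $(X,g)$ is topologically ergodic, the set $A=\{n\in\mathbb{N}: g^n(U)\cap V\neq\emptyset\}$ has positive upper density. Next I would show that $kA=\{kn: n\in A\}\subseteq N_{f_{1,\infty}}(U,V)$: indeed $f_1^{kn}(U)=g^n(U)$, so $kn\in A$ forces $f_1^{kn}(U)\cap V\neq\emptyset$, i.e. $kn\in N_{f_{1,\infty}}(U,V)$. The final step is to deduce that $N_{f_{1,\infty}}(U,V)$ itself has positive upper density from the fact that it contains $kA$ with $A$ of positive upper density; since within the window $N_{kn}=\{0,1,\ldots,kn-1\}$ the set $kA$ contributes at least $|A\cap N_n|$ elements, we get
\[
\limsup_{m\to\infty}\frac{|N_{f_{1,\infty}}(U,V)\cap N_m|}{m}\;\ge\;\limsup_{n\to\infty}\frac{|A\cap N_n|}{kn}\;=\;\frac{1}{k}\,\limsup_{n\to\infty}\frac{|A\cap N_n|}{n}\;>\;0,
\]
so $N_{f_{1,\infty}}(U,V)$ has positive upper density and $(X,f_{1,\infty})$ is topologically ergodic.

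The one genuine subtlety — and the place where the argument is slightly less trivial than in Propositions 3.1 and 4.1 — is this last density-transfer step: unlike membership in a hereditary-upwards Furstenberg family, positive upper density is not automatically inherited by supersets in a way one can quote, so one has to do the explicit counting estimate along the subsequence of window lengths $m=kn$ and observe that passing to a subsequence of $n$'s can only help the $\limsup$. Everything else is bookkeeping about the identification $f_1^{kn}=g^n$, which is exactly the device already used in Proposition 3.1. I would therefore write the proof tersely, pointing back to that identification and spending the one displayed line above on the density inequality.
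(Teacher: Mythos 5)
Your argument is correct and follows essentially the same route as the paper, whose own ``proof'' is just the terse observation in the paragraph preceding the proposition; you supply the identification $f_1^{kn}=(f_k\circ f_{k-1}\circ\cdots\circ f_1)^n$ and the $1/k$ density estimate that the paper leaves implicit. One small quibble: positive upper density \emph{is} automatically inherited by supersets (since $C\subseteq B$ gives $|C\cap N_n|\le |B\cap N_n|$, the sets of positive upper density form a hereditary-upwards family); the genuine subtlety is only the dilation $A\mapsto kA$, which costs the factor $1/k$, and your displayed inequality handles that correctly.
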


In the next example, we show that the converse of the above result need not be true.

\begin{exm} \end{exm} 
Let $I$ be the interval $[0,1]$ and $f_1, f_2$ on $I$ be defined by: 
\[ f_1(x) = \begin{cases}
3x+1/2,  & \text{for} \ x \in \left[0, \frac{1}{6}\right] \\
-3/2x+5/4, & \text{for} \ x \in \left[\frac{1}{6},\frac{5}{6}\right] \\
3x-5/2, & \text{for} \ x \in \left[\frac{5}{6},1\right].
\end{cases} \]

\[ f_2(x) = \begin{cases}
x+1/2,  & \text{for} \ x \in \left[0, \frac{1}{2}\right] \\
-4x+3, & \text{for} \ x \in \left[\frac{1}{2},\frac{3}{4}\right] \\
2x-3/2, & \text{for} \ x \in \left[\frac{3}{4},1\right].
\end{cases} \]

\[ (f_2\circ f_1(x)) = \begin{cases}
-12x+1,  & \text{for} \ x \in \left[0, \frac{1}{12}\right] \\
6x-1/2, & \text{for} \ x \in \left[\frac{1}{12},\frac{1}{6}\right] \\
-3x+1, & \text{for} \ x \in \left[\frac{1}{6},\frac{1}{3}\right] \\
6x-3, & \text{for} \ x \in \left[\frac{1}{3},\frac{1}{2}\right] \\
-3/2x+7/4, & \text{for} \ x \in \left[\frac{1}{2},\frac{5}{6}\right] \\
3x-2, & \text{for} \ x \in \left[\frac{5}{6},1\right].
\end{cases} \]

Let $(X,f_{1,\infty})$ be the non-autonomous system generated by the the finite family $F=\{f_1, f_2\}$ and $(X,f_2\circ f_1)$ be the corresponding autonomous system. The system $(X,f_2\circ f_1)$ has an invariant set $[1/2,0]$ and hence it is not topologically ergodic. However, as $f_1$ expands every open set $U$ in [0,1] and $f_2$ expands the right half of the unit interval with $f_2([0,1/2 ])$ = [ 1/2,1], the non-autonomous system $(X,f_{1,\infty})$ is topologically ergodic.

In the following example we show that the non-autonomous systems $(X,f_{1,\infty})$ generated by finitely many maps can be topologically ergodic even if none of the corresponding autonomous systems is topologically ergodic.

\begin{exm} \end{exm}
Let $I$ be the interval $[0,1]$ and $f_1, f_2$ on $I$ be defined by:  
\[ f_1(x) = \begin{cases}
3x,  & \text{for} \ x \in \left[0, \frac{1}{3}\right] \\
-x+4/3, & \text{for} \ x \in \left[\frac{1}{3},1\right].
\end{cases} \]

\[ f_2(x) = \begin{cases}
-x+1/3,  & \text{for} \ x \in \left[0, \frac{1}{3}\right] \\
3/2x-1/2, & \text{for} \ x \in \left[\frac{1}{3},1\right].
\end{cases} \]

Let $\mathbb{F}=\{f_1,f_2\}$ and $(X,f_{1,\infty})$ be non-autonomous system generated by $\mathbb{F}$. Note that [1/3,1] is an invariant set for $f_1$ and [0,1/3] is an invariant set for $f_2$. Therefore, neither $f_1$ nor $f_2$ is topologically ergodic. However, their composition given by

\[ (f_2\circ f_1(x)) = \begin{cases}
-3x+1/3,  & \text{for} \ x \in \left[0, \frac{1}{9}\right] \\
9/2x-1/2, & \text{for} \ x \in \left[\frac{1}{9},\frac{1}{3}\right] \\
3/2(1-x), & \text{for} \ x \in \left[\frac{1}{3},1\right].
\end{cases} \]
\\is topologically ergodic and hence by Proposition 5.1, the non-autonomous system $(X,f_{1\infty})$ is topologically ergodic.

\section*{Acknowledgement}
The first author is funded by GOVERNMENT OF INDIA, MINISTRY OF SCIENCE and TECHNOLOGY No: DST/INSPIRE Fellowship/[IF160750].

\end{document}